\let\ssection=\section
\renewcommand{\section}{\setcounter{equation}{0}\ssection}
\newcommand{\Fix}{\mathop{\rm Fix}\nolimits}
\newcommand{\Stf}{\mathop{\rm Stab}\nolimits_F}
\newcommand{\Stg}{\mathop{\rm Stab}\nolimits_G}
\theoremstyle{plain}
\newtheorem{theorem}{Theorem}[section]
\newtheorem{corollary}[theorem]{Corollary}
\newtheorem{proposition}[theorem]{Proposition}
\theoremstyle{definition}
\newtheorem{definition}[theorem]{Definition}
\newtheorem{problem}[theorem]{Problem}
\newtheorem{notation}[theorem]{Notation}
\newtheorem{remark}[theorem]{Remark}
\newtheorem*{theoremirrational}{Theorem~\ref{thm_schreier_irrational}}
\newtheorem*{theoremconjug}{Theorem~\ref{cor_action_conjug}}
\newtheorem*{theoremrational}{Theorem~\ref{thm_schreier_rational}}
\begin{document}
\title{Schreier graphs of actions of Thompson's group $F$ on the unit interval and on the Cantor set}
\author{Dmytro Savchuk\\ University of South Florida}

\maketitle

\abstract{Schreier graphs of the actions of Thompson's group $F$ on the orbits of all points of the unit interval and of the Cantor set with respect to the standard generating set $\{x_0,x_1\}$ are explicitly constructed. The closure of the space of pointed Schreier graphs of the action of $F$ on the orbits of dyadic rational numbers and corresponding Schreier dynamical system are described. In particular, we answer the question of Grigorchuk on the Cantor-Bendixson rank of the underlying space of the Schreier dynamical system in the context of $F$. As applications we prove that the pointed Schreier graphs of points from $(0,1)$ are amenable, have infinitely many ends, and are pairwise non-isomorphic. Moreover, we prove that points $x,y\in(0,1)$ have isomorphic non-pointed Schreier graphs if and only if they belong to the same orbit of $F$.}

\section*{Introduction}

Thompson's group $F$ was first defined by Richard Thompson in 1965 in the context of his work on logic. It was later used in~\cite{mckenzie_t:unsolvable} in the construction of finitely-presented groups with unsolvable word problem. The related Thompson's groups $T$ and $V$ were the first examples of finitely presented simple groups. Since then $F$ was rediscovered several times as a group arising naturally in different contexts. It has connections to homotopy theory, group-based cryptography, the four-color theorem, and many other areas~\cite{cannon_fp:intro_thompson}.

In this paper we study Schreier graphs related to $F$. These graphs are generalizations of the Cayley graph of $G$, which are constructed for each choice of a generating set for $G$ and a subgroup of $G$. Another equivalent definition of Schreier graphs comes from the action of $G$ on some set. Corresponding definitions are given in Section~\ref{sec_schreier_dyadic}.

Studying Schreier graphs of $F$ has several motivations. Schreier graphs have been used occasionally as of now for more than 80 years (sometimes under the name Schreier coset diagrams~\cite{coxeter_m:generators_and_relators}). But recently they became important in the relation to problems coming from analysis~\cite{bartholdi_g:spectrum}, holomorphic dynamics~\cite{nekrash:self-similar}, ergodic theory~\cite{grigorch_kn:ergodic}, probability~\cite{kapovich:schreier}, etc. Most of the recent constructions and applications of Schreier graphs are related to groups acting on rooted trees by automorphisms (see, for example,~\cite{gns00:automata,grigorchuk-s:hanoi-cr,bondarenko:phd}). Every such action induces a sequence of Schreier graphs arising from the action on the levels of the tree, and an uncountable family of Schreier graphs of the action of the group on the orbits of elements of the boundary of the tree.

In~\cite{dangeli_dmn:schreier} the Schreier graphs of the action of the so-called Basilica group on the orbits of elements of the boundary of the tree are completely classified. The Basilica group which can be defined as an iterated monodromy group of a self-map of a complex plane $z\mapsto z^2-1$. In~\cite{nekrash:self-similar} it is shown that the Schreier graphs of iterated monodromy groups arising from the actions on rooted trees converge to the Julia sets of corresponding self-coverings. Another recent paper~\cite{bond_cdn:amenable} completely classified up to isomorphism Schreier graphs of the action of a group generated by two nontrivial states of a 3-state automaton over binary alphabet on the orbits of elements of the boundary of the tree. Surprisingly, even though the original group has exponential growth, all obtained Schreier graphs have intermediate growth. Two of these graphs were shown to be automatic in~\cite{miasnikov_s:automatic_graph}, providing the first examples of automatic graphs of intermediate growth.

Some time ago Zimmer~\cite{zimmer:ergodic_theory},
and more recently Vershik~\cite{vershik:nonfree} and Grigorchuk~\cite{grigorch:dynamics11}, in the context of ergodic theory, switched the attention to Schreier dynamical systems, where the group acts on a compact subspace in the space of pointed Schreier graphs. There are two equivalent definitions of these dynamical systems. One can consider the action of the group on the set of pointed Schreier graphs by shifting the selected vertex over the edges. Equivalently, the group acts on the set of its subgroups by conjugation, and one can study the Schreier graphs of this action. The precise definition is given in Section~\ref{sec_closure}. Interestingly enough, sometimes it is possible to show that given an action of a group on a set, the Schreier dynamical system constructed from just one orbit, can recover the original action of the group on the whole set. For example, Vorobets in~\cite{vorobets:schreier_of_grigorchuk12} proved this phenomenon for Schreier dynamical systems arising from the action of Grigorchuk group on the boundary of binary tree.
We show in Theorem~\ref{cor_action_conjug} that this is also the case for the action of $F$ on $[0,1]$:

\begin{theoremconjug}
The action of $F$ on the perfect kernel $\mathcal D$ of the Schreier dynamical system is topologically conjugate to the standard action of $F$ on the Cantor set $\{0,1\}^\omega$.
\end{theoremconjug}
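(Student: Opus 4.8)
The plan is to realize the conjugacy through the subgroup picture of the Schreier dynamical system introduced in Section~\ref{sec_closure}, in which $F$ acts by conjugation on the closure of the set of point stabilizers. I would define the map $\Psi\colon\{0,1\}^\omega\to\mathrm{Sub}(F)$ by $\Psi(\xi)=\Stf(\xi)$, the stabilizer of $\xi$ under the standard action of $F$ on the Cantor set. Since $\Stf(g\xi)=g\,\Stf(\xi)\,g^{-1}$ for every $g\in F$, the map $\Psi$ automatically intertwines the standard action on $\{0,1\}^\omega$ with the conjugation action, so once $\Psi$ is shown to be a homeomorphism onto $\mathcal D$ the equivariance comes for free. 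The whole problem then reduces to three statements: $\Psi$ is continuous, its image is exactly the perfect kernel $\mathcal D$, and $\Psi$ is injective.

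For continuity I would verify that as $\eta\to\xi$ in $\{0,1\}^\omega$ the stabilizers $\Stf(\eta)$ converge to $\Stf(\xi)$ in the topology of Section~\ref{sec_closure}: an element of $F$ either fixes a whole dyadic neighborhood of $\xi$, and hence fixes all nearby points, or it moves $\xi$ together with all sufficiently close points, which is precisely the condition guaranteeing convergence of the corresponding pointed Schreier graphs. To identify the image with $\mathcal D$, I would note that $\{0,1\}^\omega$ is compact and perfect, so $\Psi$ (once injective) is a homeomorphism onto a closed perfect subset $P$; since $\mathcal D$ is the largest perfect subset, $P\subseteq\mathcal D$. For the reverse inclusion I would invoke the explicit description of the rational Schreier graphs from Theorem~\ref{thm_schreier_rational}: each pointed graph coming from a dyadic rational carries a distinguishing local feature near the base vertex that makes it an isolated point of the closure, so these points are stripped away by the Cantor--Bendixson derivative and cannot lie in the perfect kernel, forcing $\mathcal D\subseteq P$. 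The limit graphs that survive are then exactly those of Theorem~\ref{thm_schreier_irrational}, which reinforces the identification $\mathcal D=P$.

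The heart of the argument, and the step I expect to be the main obstacle, is injectivity of $\Psi$, i.e.\ that distinct Cantor points have distinct stabilizers. The key tool is that $F$ contains, for every dyadic subinterval $I\subseteq[0,1]$, a canonical copy of $F$ consisting of the elements supported on $I$, and that these subgroups fix only the two endpoints of $I$ while moving every point of its interior. Given $\xi\neq\eta$, they are separated by a dyadic cylinder, so I can choose such an $I$ containing the location of $\xi$ but disjoint from that of $\eta$, and then produce an element supported on $I$ that moves $\xi$ yet fixes $\eta$; this element lies in $\Stf(\eta)\setminus\Stf(\xi)$. The delicate case is that of eventually constant sequences, where $\xi$ projects to a dyadic rational: here one must use that the Cantor set, unlike the interval, keeps the two one-sided germs apart, and that $\Stf$ records which germ is fixed, so that the two preimages of a single dyadic rational still receive distinct stabilizers.

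Combining the three points, $\Psi$ is a continuous bijection from the compact space $\{0,1\}^\omega$ onto $\mathcal D$, hence a homeomorphism, and the equivariance established at the outset upgrades this to a topological conjugacy between the standard action of $F$ on $\{0,1\}^\omega$ and the action of $F$ on the perfect kernel $\mathcal D$.
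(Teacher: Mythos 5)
Your map $\Psi(\xi)=\Stf(\xi)$ does not land in $\mathcal D$, and this is not a repairable technicality: it is exactly the phenomenon the paper is organized around. By Proposition~\ref{prop_rationals}($b$), for every rational non-dyadic point there is an element of $F$ that fixes it but moves every sufficiently close point on one side; such an element belongs to $\Stf(\xi)$ but not to the limit of the stabilizers $\Stf(q_n)$ along dyadic $q_n\to\xi$. Consequently your continuity argument (``an element either fixes a dyadic neighborhood of $\xi$ or moves all sufficiently close points'') is false precisely at the rational points, and the pointed Schreier graph of $\Stf(\xi)$ --- the graph $\Gamma_\xi$ of Theorem~\ref{thm_schreier_rational}, which contains a nontrivial loop --- does not lie in $\bar{\mathcal C}$ at all, let alone in the perfect kernel. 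The points of $\mathcal D$ with preperiodic labels are the graphs $\tilde\Gamma_\xi$ of type B, which correspond to proper subgroups of $\Stf(\xi)$ (one-sided germ stabilizers); the paper stresses that $\tilde\Gamma_\xi\neq\Gamma_\xi$ for rational $\xi$ and devotes Section~\ref{sec_schreier_rational} to recovering $\Gamma_\xi$ from $\tilde\Gamma_\xi$ by gluing equally labelled vertices. Your injectivity step also fails at the two Cantor addresses $u10^\infty$ and $u01^\infty$ of a dyadic rational: an increasing homeomorphism of $[0,1]$ fixing that dyadic point fixes both of its binary addresses, so $\Stf(u10^\infty)=\Stf(u01^\infty)$ and the stabilizer does not ``record the germ'' as you claim.

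The paper's proof goes in the opposite direction and avoids all of this: it takes the map $\psi\colon\mathcal D\to X^\omega$ sending a pointed graph to the label of its base vertex, shown to be a homeomorphism in Proposition~\ref{prpo_cantor_bendixson} (with injectivity coming from the explicit combinatorial structure of the type B and C graphs established in Proposition~\ref{prop_closure}), and observes that shifting the base vertex by $f\in F$ replaces the label $w$ by $w^f$ because the labelling was constructed to be compatible with the standard action~\eqref{eqn_cantor_action}. If you want to salvage your subgroup-side approach, you must replace $\Stf(\xi)$ by the limit of the subgroups $\Stf(q_n)$ (equivalently, by the graph $\tilde\Gamma_\xi$) and prove convergence and injectivity for that family instead.
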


Schreier graphs also allow us better understand the actions of $F$. For example, Schreier graphs allow us to obtain bounds on the length of elements of $F$, give a simple way to construct elements of $F$ with prescribed action on $[0,1]$, better understand the dynamics of elements of $F$, etc.

Another motivation to study Schreier graphs is their direct relation to amenability of the group. A group is nonamenable if and only if some (all) of its Cayley graphs are nonamenable. But it is often hard to construct Cayley graphs. On the contrary, Schreier graphs can be  described explicitly in many situations and the nonamenability of any Schreier graph of a group implies nonamenability of a group. Thus, to prove nonamenability of $F$ it suffices to construct a nonamenable Schreier graph of $F$. This would provide an answer to one of the most intriguing and compelling open questions about $F$: is $F$ amenable? This question was first posed by Richard Thompson in 1960s, but gained attention after Geoghegan popularized it in 1979 (see p.549 of~\cite{gersten_s:comb_group_theory}). It is with no doubt one of the biggest driving forces for research related to $F$. The initial motivation for this question was an attempt to construct a finitely generated non-amenable group without non-abelian free subgroups, thus providing a counterexample to the von~Neumann-Day problem~\cite{day:means50}. The fact that $F$ does not have non-abelian free subgroups was proved by Brin and Squier in~\cite{brin_s:piecewise}. Since the question was posed, examples of such groups have been already found: Adian proved in~\cite{adian:nanf} that the free Burnside groups are nonamenable, later Ol'shanskii and Sapir in~\cite{olshanskii_s:non-amenable} constructed a quite non-trivial finitely presented example. Very recently, Lodha and Moore~\cite{lodha_m:geometric_von_neumann}, based on a more general construction of Monod~\cite{monod:groups_of_PL_homeos13}, constructed another finitely presented example which additionally is torsion-free. This example lives in the group of all piecewise projective homeomorphisms of $\mathbb R$ and thus has a lot in common with Thompson's group $F$: according to the unpublished result of Thurston (see~\cite{lodha_m:geometric_von_neumann} for details) $F$ also can be realized as a subgroup of this group. But the question of amenability of $F$ is now mainly interesting on its own. Definitely, Thompson's group $F$ is the most famous group whose amenability is not determined yet.

Thompson's group $F$ does not act by automorphisms on the rooted tree, but it acts by homeomorphisms on the boundary of the binary tree, which is homeomorphic to the Cantor set. This action agrees with the action on the unit interval and it induces the family of Schreier graphs of the action of $F$ on the orbits of elements of the Cantor set.

This paper is a natural extension of paper~\cite{savchuk:thompson}, in which the Schreier graph of the action of $F$ on the dyadic rational numbers is constructed. The question that remained unanswered in~\cite{savchuk:thompson} is about the structure of Schreier graphs corresponding to the action of $F$ on the orbits of points in $[0,1]$ which are not dyadic rationals. In this paper we completely answer this question by explicitly constructing these Schreier graphs based on the binary expansion of a point $x\in[0,1]$.

For simplicity, in this paper we will call the Schreier graph of the action of $F$ on the orbit of a point $x\in[0,1]$ by simply the Schreier graph of $x$ and denote it by $\Gamma_x$. Sometimes there will be need to specify the base vertex in a Schreier graph and, with a slight abuse of notation, the phrase ``the Schreier graph of $x$'' will also mean ``the pointed Schreier graph of the action of $F$ on the orbit of a point $x$ with selected vertex $x$''.

Since dyadic rational numbers are dense in $[0,1]$, for each point $x$ one can choose a sequence of dyadic rationals converging to $x$ and one might expect that the sequence of corresponding Schreier graphs will converge to the Schreier graph of $x$ in a certain suitable sense. It turns out that this is exactly the case when $x$ is irrational, which enables us to construct the Schreier graphs of the actions of $F$ on the orbits of irrational points in $[0,1]$:

\begin{theoremirrational}
Each irrational number $x$ in $[0,1]$ can be uniquely written as either $0.1^n0w$ or $0.0^m1w$ for some infinite word $w$ over $\{0,1\}$.
The Schreier graph $\Gamma_x$ of the action of $F$ on the orbit of $x$ is depicted in Figure~\ref{fig_correspondence} (particularly for $x=0.0001w$, where $w=1001\ldots$).
\end{theoremirrational}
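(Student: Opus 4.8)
The plan is to prove this in three stages: an elementary decomposition of $x$, a local-convergence (stabilization) argument that reduces the irrational case to the already-understood dyadic case, and an explicit identification of the limit with Figure~\ref{fig_correspondence}. The decomposition is immediate: since $x$ is irrational its binary expansion is unique and is not eventually constant, so it contains infinitely many $0$'s and infinitely many $1$'s. Reading from the left, either the first digit is $1$, whence $x=0.1^n0w$ with $n\ge 1$ the number of leading $1$'s and $w$ the uniquely determined tail following the first $0$, or the first digit is $0$, whence symmetrically $x=0.0^m1w$. The two cases are told apart by the first digit, and $n$ (resp.\ $m$) and $w$ are then forced, giving existence and uniqueness.

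The heart of the argument is a finite-reach lemma for the action on binary expansions. Writing each point of the orbit as $0.v$, I would record the action of $x_0^{\pm1}$ and $x_1^{\pm1}$ as prefix replacements governed by the tree-pair diagrams of the generators: a neighbor of $0.v$ is obtained by locating the relevant bounded-length prefix of $v$, replacing it, and carrying the remaining tail along unchanged. An element $g\in F$ represented by a word of length $\le R$ in the generators corresponds to a tree pair of depth $O(R)$, so $g$ reads and alters only a prefix of $v$ of length $O(R)$ and fixes everything beyond it. This yields the key stabilization statement: the ball of radius $R$ about the base vertex in $\Gamma_{x}$ depends only on a prefix of the expansion of $x$ of length $f(R)$ for some explicit $f$. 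Hence for any point $x'$ whose expansion agrees with that of $x$ on the first $f(R)$ digits, the radius-$R$ balls of $(\Gamma_{x'},x')$ and $(\Gamma_{x},x)$ are isomorphic as pointed, generator-labeled graphs.

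I would then choose dyadic rationals $x_k\to x$ sharing ever-longer prefixes with $x$ and invoke the above: in the local (rooted) topology on pointed graphs the sequence $(\Gamma_{x_k},x_k)$ converges, with limit $(\Gamma_{x},x)$. Feeding in the explicit description of the dyadic Schreier graphs from~\cite{savchuk:thompson} and letting $R\to\infty$, the stabilized balls assemble into a single infinite labeled graph. It then remains to check, directly from the prefix-replacement rules, that the neighbors of every vertex of this limit match the edges drawn in Figure~\ref{fig_correspondence}, and that the successive bits of $w$ control the local structure exactly as depicted (the case $w=1001\ldots$ serving as the illustration).

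The main obstacle I anticipate is the stabilization lemma and the passage to the limit, where two points need care. First, the depth-versus-length estimate must be made uniform and genuinely bounded, despite the fact that applying generators can both lengthen and shorten the active prefix through carrying, so that the portion of the expansion reachable by words of length $\le R$ is provably finite. Second, one must reconcile the fact that the \emph{global} orbit structures are very different---all dyadic rationals lie in a single $F$-orbit, whereas the orbit of $x$ is the comparatively thin set $\{0.uw\}$---by confining the comparison to bounded balls, where the degenerate, eventually-constant tails of the dyadic approximants fall outside the relevant radius and hence do not affect the limit. Once these are settled, the edge-by-edge identification with Figure~\ref{fig_correspondence} is routine.
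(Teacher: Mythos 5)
Your overall strategy (approximate $x$ by dyadic rationals sharing ever longer prefixes, pass to the limit in the pointed-graph topology, then identify the limit with Figure~\ref{fig_correspondence}) is the same as the paper's, but the key lemma you rest it on is false as stated, and this is a genuine gap rather than a technicality. You claim that the radius-$R$ ball of $(\Gamma_x,x)$ is determined by a prefix of the binary expansion of $x$ of length $f(R)$ with $f$ depending only on $R$, because an element of word length at most $R$ only reads and rewrites a bounded prefix. The bounded-reach observation is correct, but it only shows that each \emph{vertex} $g(x)$ of the ball is computed from a bounded prefix together with the carried tail; it does not control the \emph{identifications} $g(x)=g'(x)$, which are equalities of infinite strings and can depend on the entire tail. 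Concretely, take $x=10^N1w'$ irrational and $x'=10^N0^\infty=10^\infty$ (that is, $\tfrac12$). These agree on a prefix of length $N+1$, yet $x_1$ fixes $x'$ and does not fix $x$ (it sends $10^N1w'$ to $10^{N+1}1w'$), so the base vertex of $\Gamma_{x'}$ carries an $x_1$-loop while that of $\Gamma_x$ does not: already the radius-$1$ balls differ, for every $N$. Hence no function $f(R)$ independent of $x$ can exist, and your closing remark that the eventually-constant tails of the dyadic approximants ``fall outside the relevant radius'' is exactly the point that fails.

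What is true, and what the paper proves in Proposition~\ref{prop:psi_continuous}, is that the required prefix length depends on $x$: if $h$ has length at most $2R$ and fixes $y$ but not $y'$, then $\partial\Fix(h)$ must meet the interval between $y$ and $y'$; since $\partial\Fix(h)$ is a finite set of \emph{rational} numbers by Proposition~\ref{prop_rationals}, the union $D_{2R}$ of these sets over all $h$ of length at most $2R$ is finite, and an irrational $x$ therefore has a neighborhood avoiding it. On that neighborhood the radius-$R$ balls are constant, which gives continuity of $x\mapsto\Gamma_x$ at irrational points and hence the convergence $\Gamma_{q_i}\to\Gamma_x$ you want. Note that this argument genuinely uses the irrationality of $x$ (the obstructions are rational points), which is why the statement fails at rational points and why the paper must treat them separately. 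If you replace your stabilization lemma by this $\partial\Fix$ argument, the rest of your outline --- feeding in the known structure of the dyadic Schreier graph and reading off from Table~\ref{tab_labeling} how the successive bits of $w$ steer the upward path --- goes through essentially as in the paper.
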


However, we cannot use the same method for rational numbers. This phenomenon occurs because for every rational number $q\in(0,1)$ (not necessarily dyadic rational) there is an element of $F$ that fixes $q$ but does not fix any neighborhood of $q$. This simple fact is of a folklore type, but is somewhat counterintuitive. For the sake of completeness we give its proof here (see Proposition~\ref{prop_rationals}).

In order to understand Schreier graphs of rational numbers in $[0,1]$ we still use the Schreier graph of the action on dyadic rationals. By changing the selected point in this Schreier graph we obtain a family $\mathcal C$ of pointed graphs. Thompson's group $F$ acts on this set by shifting the basepoint according to the labels on the edges. By continuity this action can be extended to the closure $\overline{\mathcal C}$ of the set $\mathcal C$ in the pointed graphs topology (sometimes called Gromov-Hausdorff topology), in which two pointed marked graphs are close if they have isomorphic balls of large radius centered at the selected vertices (see \cite{gromov:structures}, Chapter 3 for the general construction, and~\cite{grigorch:degrees} for the first appearance in the context of pointed graphs). It turns out that after removing isolated points from this closure, one obtains a set of pointed marked graphs which is homeomorphic to the Cantor set. This answers the question of Grigorchuk (Question~6.1 in~\cite{grigorch:dynamics11eng}) in the context of $F$.  Moreover, this set is invariant under the action of $F$, and the restriction of the action of $F$ to this set is conjugate to the standard action of $F$ on the Cantor set. Now using this interpretation of the standard action it is not too hard to understand the Schreier graphs of rational points.

\begin{theoremrational}
Each rational point $x$ of the Cantor set $\{0,1\}^\omega$ except $000\ldots$ and $111\ldots$ can be uniquely written as either $x=1^n0vwww\ldots$ or $x=0^m1vwww\ldots$ for some finite words $v,w$ over $\{0,1\}$ such that $w$ is not a proper power and the ending of $v$ differs from the one of $w$. The Schreier graph $\Gamma_x$ of the action of $F$ on the orbit of $x$ is depicted in Figure~\ref{fig_correspondence_rational}.
\end{theoremrational}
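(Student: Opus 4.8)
The plan is to treat the two halves of the statement separately: the combinatorial normal form, and the description of $\Gamma_x$. For the normal form, recall that a point of $\{0,1\}^\omega$ is rational exactly when its binary expansion is eventually periodic, and the only two constant sequences are $000\ldots$ and $111\ldots$, which are the excluded fixed points $0$ and $1$ of the action. For any other rational $x$ I would first peel off the maximal initial run of equal letters: if $x$ begins with $1$ then, as $x\neq 111\ldots$, there is a well-defined number $n\geq 1$ of leading $1$'s, so $x=1^n0\cdots$, and symmetrically $x=0^m1\cdots$ when it begins with $0$; this length is determined by $x$, so the first splitting is unique. The remaining tail $t$ is eventually periodic, and I would write $t=vwww\ldots$ by taking $p$ to be the minimal period and $k$ the minimal preperiod, with $w=t_{k+1}\cdots t_{k+p}$ and $v=t_1\cdots t_k$. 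Minimality of $p$ forces $w$ not to be a proper power, and minimality of $k$ is equivalent to $t_k\neq t_{k+p}$, that is, to the last letter of $v$ differing from the last letter of $w$. Uniqueness then follows from the standard rotation argument: if the last letters agreed, the final letter of $v$ could be absorbed into a cyclic rotation of $w$ (which is again primitive of the same length), contradicting the minimality of $k$.

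For the graph itself the key tool is the conjugacy of Theorem~\ref{cor_action_conjug}. Under the conjugating homeomorphism $\Phi\colon\{0,1\}^\omega\to\mathcal D$ the orbit $F\cdot x$ is carried to the orbit $F\cdot\Phi(x)$ inside the perfect kernel $\mathcal D\subset\bar{\mathcal C}$, and since $\Phi$ intertwines the two $F$-actions it induces an isomorphism of the associated pointed Schreier graphs. Hence $\Gamma_x$ is isomorphic to the Schreier graph of $\Phi(x)$ for the Schreier dynamical system, whose vertices are the re-rootings of the limiting marked graphs produced in the construction of $\bar{\mathcal C}$. I would identify $\Phi(x)$ for $x$ in the normal form above by tracking how the prefix $1^n0$ or $0^m1$ and the eventually periodic tail $vwww\ldots$ determine the limiting pointed graph, and then read off the $x_0$- and $x_1$-labeled adjacencies on $F\cdot\Phi(x)$. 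The eventual periodicity of $x$ is precisely what produces the finite periodic (looping) part of Figure~\ref{fig_correspondence_rational}, in contrast with the infinite ray of the irrational case in Theorem~\ref{thm_schreier_irrational}.

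The main obstacle is exactly the phenomenon recorded in Proposition~\ref{prop_rationals}: for rational $x$ the stabilizer is not open, so $\Gamma_x$ is \emph{not} the Gromov--Hausdorff limit of the Schreier graphs of dyadic approximations, and the limiting method that works for irrational points fails here. Passing through the conjugacy circumvents this, but it forces me to verify that the balls around $\Phi(x)$ stabilize to the claimed shape, and in particular to control the seam where the preperiod $v$ meets the periodic tail and the wrap-around caused by the period $w$. The guiding idea is that $wwww\ldots$ is preserved, up to rotation, by an element of $F$ fixing $x$; it is this stabilizing element, which has no analogue in the irrational case, that closes the tail of the graph into the periodic cycle of Figure~\ref{fig_correspondence_rational}. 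Confirming that this is the only new identification, and that it occurs precisely at the position where the last letters of $v$ and $w$ differ, is the delicate part of the argument.
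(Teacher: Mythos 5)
Your proposal follows essentially the same route as the paper: the normal form is the standard minimal-period/minimal-preperiod argument, and the graph description is obtained by transferring the problem through the conjugacy of Theorem~\ref{cor_action_conjug} to the orbit of the type-B graph $\tilde\Gamma_x$ in $\mathcal D$, where re-rootings with equal vertex labels get identified, so the periodic tail closes into the loop and the condition on the endings of $v$ and $w$ guarantees the gluing is well defined. This matches the paper's proof, which phrases the same identification as ``gluing the vertices of $\tilde\Gamma_x$ with identical labels.''
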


The structure of the paper is as follows. In Section~\ref{sec_thompson_group} the definition and basic facts about Thompson's group are given. Section~\ref{sec_schreier_dyadic} contains a description from~\cite{savchuk:thompson} of the Schreier graph of the action of $F$ on the set of dyadic rational numbers from the interval $(0,1)$. The Schreier graphs of the action of $F$ on the orbits of irrational numbers are constructed in Section~\ref{sec_schreier_irrational}. The closure of the space of pointed Schreier graphs of dyadic rational numbers is studied in Section~\ref{sec_closure}. This finally allows us to give a complete description of Schreier graphs of all rational numbers and to provide some applications in Section~\ref{sec_schreier_rational}.

The author would like to express deep gratitude to Rostislav Grigorchuk for bringing my attention to the world of Thompson's groups and for continued attention to the project; and to Matthew Brin and Lucas Sabalka for careful reading of the draft and valuable comments that enhanced the paper. I also indebted to the anonymous referee for thorough reading of the manuscript and providing the precious generous feedback.

\section{Thompson's group}
\label{sec_thompson_group}

\begin{definition}
\emph{Thompson's group} $F$ is the group of all strictly
increasing piecewise linear homeomorphisms from the closed unit
interval $[0,1]$ to itself that are differentiable everywhere except
at finitely many dyadic rational numbers and such that on the
intervals of differentiability the derivatives are integer powers of
$2$. The group operation is composition of homeomorphisms.
\end{definition}

Basic facts about this group can be found in the survey
paper~\cite{cannon_fp:intro_thompson}. In particular, it is proved
that $F$ is generated by two homeomorphisms $x_0$ and $x_1$ given by
$$x_0(t)=\left\{
\begin{array}{ll}
\frac t2,&0\leq t\leq\frac12,\\
t-\frac14,&\frac12\leq t\leq\frac34,\\
2t-1,&\frac34\leq t\leq1,\\
\end{array} \right.
\qquad x_1(t)=\left\{
\begin{array}{ll}
t,&0\leq t\leq\frac12,\\
\frac t2+\frac14,&\frac12\leq t\leq\frac34,\\
t-\frac18,&\frac34\leq t\leq\frac78,\\
2t-1,&\frac78\leq t\leq1.\\
\end{array} \right.
$$

The graphs of $x_0$ and $x_1$ are displayed in
Figure~\ref{fig_gens}.
\begin{figure}[h]
\begin{center}
\includegraphics{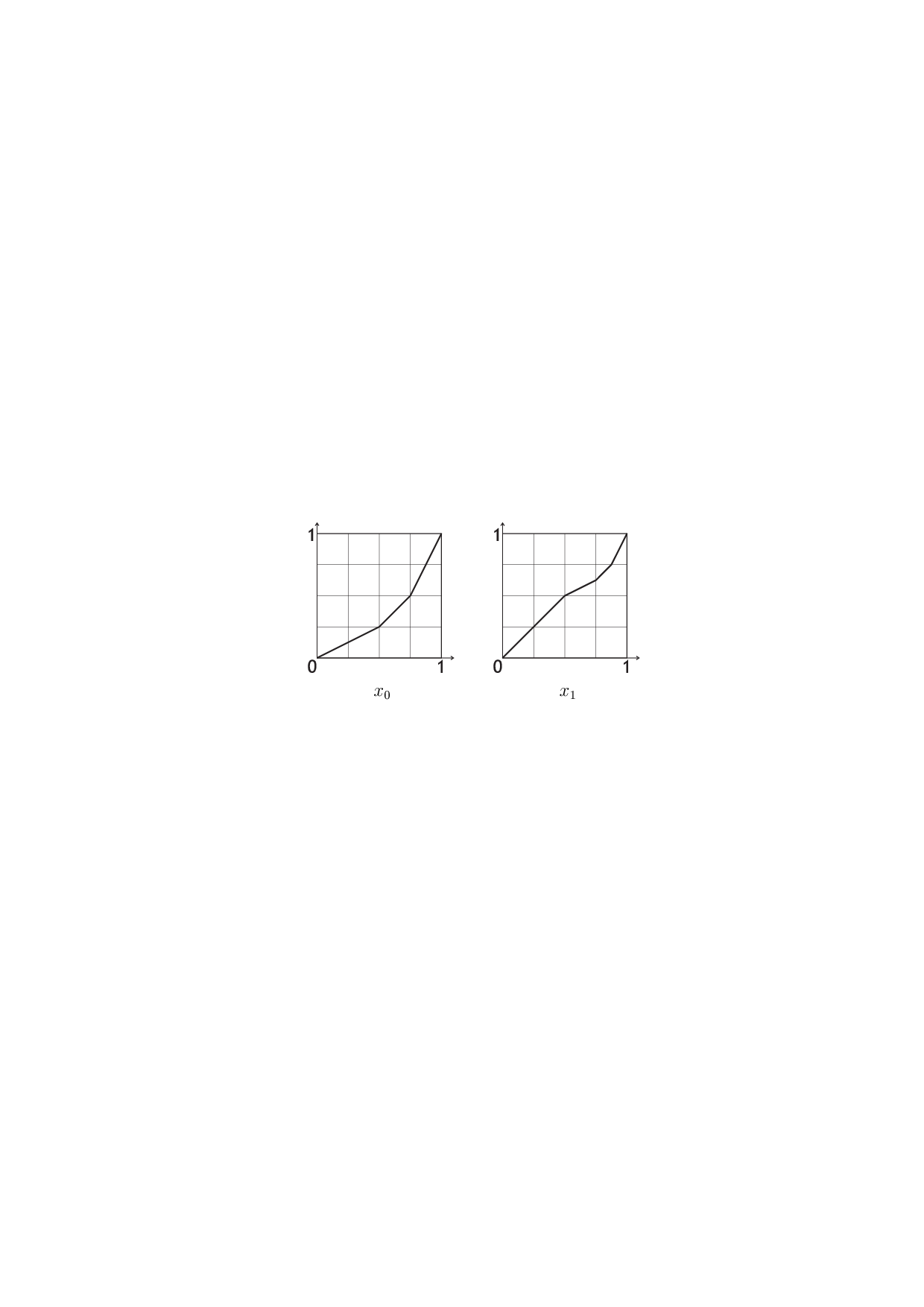}
\end{center}
\caption{Generators of $F$\label{fig_gens}}
\end{figure}

Throughout the paper we will use the following convention.

\begin{notation}
For any two elements $f$, $g$ of $F$ and any $x\in[0,1]$
\begin{equation}
\label{eqn_conv}
(fg)(x)=g(f(x)),\quad f^g=gfg^{-1}.
\end{equation}
\end{notation}

Sometimes it will be more convenient to consider the induced action of $F$ on the Cantor set. Each point of the interval $[0,1]$ can be associated with its binary expansion. This allows us to extend naturally the action of Thompson's group $F$ on the sets $X^\ast$ and $X^\omega$ of all finite and infinite words over the alphabet $X=\{0,1\}$ respectively. The latter set is homeomorphic to a Cantor set and it is easy to see that $F$ acts on it by homeomorphisms.  The generators $x_0$ and $x_1$ can be defined as follows:

\begin{equation}
\label{eqn_cantor_action}
\begin{array}{ccc}
x_0:\ \left\{\begin{array}{lll}
0w&\mapsto& 00w,\\
10w&\mapsto& 01w,\\
11w&\mapsto& 1w,
\end{array}\right.&\hspace{1.5cm}&x_1:\ \left\{\begin{array}{lll}
0w&\mapsto& 0w,\\
10w&\mapsto& 100w,\\
110w&\mapsto& 101w,\\
111w&\mapsto& 11w,
\end{array}\right.\\
\end{array}
\end{equation}
where $w$ is an arbitrary word in $X^\omega$. These homeomorphisms can be defined by finite state asynchronous automata. We will refer to the above action of $F$ as to the \emph{standard} action of $F$ on $X^\ast$.

Note that the dyadic rational numbers in $[0,1]$ correspond to the sequences ending in $000\ldots$ or $111\ldots$, and rational points in $[0,1]$ correspond to the preperiodic sequences $vw^{\infty}$ in $X^\omega$. We will call such sequences \emph{rational} elements of $X^\omega$. All other elements we will call \emph{irrational}. There is a one-to-one correspondence between irrational elements of $X^\omega$ and irrational numbers in $[0,1]$.

The main purpose of this paper is to describe the action of $F$ on $[0,1]$ via the Schreier graphs of this action restricted to the orbits of points in $[0,1]$.

\begin{definition}
Let $G$ be a group generated by a finite generating set $S$ acting on a set $M$. The \emph{(orbital) Schreier graph} $\Gamma(G,S,M)$ of the action of $G$ on $M$ with respect to the generating set $S$ is an oriented labeled graph defined as follows. The set of vertices of $\Gamma(G,S,M)$ is $M$ and there is an arrow from $x\in M$ to $y\in M$ labeled by $s\in S$ if and only if $x^s=y$.
\end{definition}

An equivalent alternative view of Schreier graphs goes back to Schreier, who called these graphs \emph{coset graphs}. For any subgroup $H$ of $G$, the group $G$ acts on the right cosets in $G/H$ by right multiplication. The corresponding Schreier graph $\Gamma(G,S,G/H)$ is denoted as $\Gamma(G,S,H)$ or just $\Gamma(G,H)$ if the generating set is clear from the context.

Conversely, if $G$ acts on $M$ transitively, then $\Gamma(G,S,M)$ is canonically isomorphic to $\Gamma(G,S,\Stg(x))$ for any $x\in M$, where the vertex $y\in M$ in $\Gamma(G,S,M)$ corresponds to the coset from $G/\Stg(x)$ consisting of all elements of  $G$ that move $x$ to $y$.  Also, to simplify notation, we will call $\Gamma(G,S,\Stg(x))$ simply the Schreier graph of $x$ and denote by $\Gamma_x$ when the group, the set and the action are clear from context.

In the subsequent sections we will completely describe the Schreier graphs of the action of $F$ on the orbits of all points of $[0,1]$. It is also important to mention that each dyadic rational point $0.w10^\infty$ in $[0,1]$ corresponds to two points in the Cantor set $X^\omega$: $w10^\infty$ and $w01^\infty$. The Schreier graphs of the action of $F$ on the orbits of these two points are isomorphic to the Schreier graph of the action of $F$ on the orbit of corresponding point in $[0,1]$ as pointed marked graphs. However, they have different labeling of vertices.

From the second viewpoint on the Schreier graphs, we construct the Schreier graphs with respect to stabilizers of points in $(0,1)$.
Since the ``complexity'' of the Schreier graph of $F$ with respect to a subgroup $H$ decreases as $H$ increases, it is natural to start from the description of Schreier graphs with respect to maximal subgroups of $F$. The following proposition shows that the stabilizers of points in $(0,1)$ are exactly of this kind.

\begin{proposition}
For each $x\in (0,1)$ the subgroup $\Stf(x)$ is maximal in $F$.
\end{proposition}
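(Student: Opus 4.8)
The plan is to prove the equivalent statement that no subgroup lies strictly between $\Stf(x)$ and $F$; i.e. that $\langle\Stf(x),g\rangle=F$ for every $g\in F$ with $x^g\neq x$. So fix such a $g$, set $y:=x^g$ and $K:=\langle\Stf(x),g\rangle$. Since the elements of $F$ are increasing homeomorphisms, the orbit of $x$ is linearly ordered by the action and I may assume $y>x$, the case $y<x$ being symmetric. The first point to record is that $K$ automatically contains a \emph{second} point stabilizer: since $y=x^g$ lies in the orbit of $x$, the stabilizer $\Stf(y)$ is a conjugate of $\Stf(x)$ by $g^{\pm1}$, and both $\Stf(x)$ and $g$ belong to $K$, so $\Stf(y)\subseteq K$.

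Next I would feed in rigid stabilizers. For a dyadic subinterval $[a,b]\subseteq[0,1]$ write $F_{[a,b]}$ for the subgroup of elements of $F$ that are the identity outside $[a,b]$; this is a copy of $F$ supported on $[a,b]$, acting transitively on the dyadic rationals of $(a,b)$. Any element supported on $[t,1]$ with $t$ dyadic and $t>x$ fixes $x$, and any element supported on $[0,s]$ with $s$ dyadic and $s<y$ fixes $y$. Choosing dyadic numbers $x<t<s<y$ — possible for \emph{every} $x\in(0,1)$, dyadic, rational or irrational, by density of the dyadics — I obtain $F_{[t,1]}\subseteq\Stf(x)\subseteq K$ and $F_{[0,s]}\subseteq\Stf(y)\subseteq K$. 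The whole proposition is thereby reduced to the generation statement
\[
\langle F_{[0,s]},\,F_{[t,1]}\rangle=F\qquad(0<t<s<1,\ \text{dyadic}),
\]
for the two overlapping dyadic intervals $[0,s]$ and $[t,1]$, and this is the technical heart of the argument.

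To prove this generation lemma the decisive device is a conjugation trick that enlarges the two given rigid stabilizers inside $K$. For $\gamma\in F_{[t,1]}$ one has $\gamma([0,s])=[0,\gamma(s)]$, hence $\gamma F_{[0,s]}\gamma^{-1}=F_{[0,\gamma(s)]}$; since $F_{[t,1]}\cong F$ carries $s$ to every dyadic of $(t,1)$, the group $K$ in fact contains $F_{[0,q]}$ for \emph{all} dyadic $q\in(t,1)$, and symmetrically $F_{[r,1]}$ for all dyadic $r\in(0,s)$. Consequently $K$ contains every element of $F$ that is the identity near $1$ (each such element is supported in some $[0,q]$ with dyadic $q<1$) and, likewise, every element that is the identity near $0$. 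Equivalently, $K$ contains the commutator subgroup $[F,F]$ — the elements that are the identity near both endpoints — and its image in the abelianization $F/[F,F]\cong\mathbb Z^2$ is everything, because $F_{[0,s]}$ realizes all slopes at $0$ and $F_{[t,1]}$ all slopes at $1$. Therefore $K=F$, completing the proof.

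I expect the generation lemma to be the only real obstacle: the reduction to two overlapping rigid stabilizers is formal, but showing that two fixed copies of $F$ on overlapping intervals already generate $F$ is not, and the non-obvious step is precisely the observation that one should first use conjugation (available because both copies lie in $K$) to manufacture the whole nested families $\{F_{[0,q]}\}$ and $\{F_{[r,1]}\}$, after which the endpoint-slope bookkeeping via $F/[F,F]\cong\mathbb Z^2$ closes the argument. It is worth emphasizing that this approach is completely uniform in the arithmetic nature of $x$, using only the density of the dyadic rationals together with the order-preserving action of $F$; in particular it applies verbatim whether $x$ is dyadic, rational or irrational.
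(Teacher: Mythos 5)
Your proof is correct, but it takes a genuinely different route from the paper's. The paper argues pointwise: given $f\notin\Stf(x)$ and an arbitrary $g$, it uses transitivity of $F$ on pairs of dyadic rationals plus a cut-and-paste of piecewise-linear graphs to build an element $h\in\Stf(x)$ carrying $f^{\pm1}(x)$ to $g(x)$, so that $fh$ realizes the same displacement of $x$ as $g$ and hence $g\in\langle\Stf(x),f\rangle$. You instead reduce to the structural statement that two rigid stabilizers $F_{[0,s]}$ and $F_{[t,1]}$ of overlapping dyadic intervals generate $F$, and prove it by conjugating one rigid stabilizer by the other to sweep the cut points across the interval, obtaining all elements trivial near $0$ and all elements trivial near $1$, then closing the gap with the abelianization $F/[F,F]\cong\mathbb Z^2$ (where you use the standard fact that the kernel of the endpoint-slope homomorphism is exactly the set of elements trivial near both endpoints, which for piecewise-linear maps fixing $0$ and $1$ is immediate). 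Each step checks out: $\Stf(y)\subseteq K$ by conjugacy of stabilizers of points in one orbit, $F_{[t,1]}\subseteq\Stf(x)$ and $F_{[0,s]}\subseteq\Stf(y)$ by the choice $x<t<s<y$, and the conjugation identity $\gamma F_{[0,s]}\gamma^{-1}=F_{[0,\gamma(s)]}$. What your approach buys is conceptual clarity and uniformity -- it treats dyadic, rational, and irrational $x$ identically and isolates a reusable generation lemma of the kind that appears in later work on maximal subgroups of $F$; what the paper's approach buys is self-containedness, since it needs only transitivity on pairs of dyadic rationals and no input about $F/[F,F]$. The only cosmetic omission is that maximality also requires $\Stf(x)\neq F$, which is immediate since, e.g., $x_0$ fixes no point of $(0,1)$.
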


\begin{proof}

This proposition is a generalization of Proposition~2 in~\cite{savchuk:thompson} that considered only the case of dyadic rational numbers. The proof uses the same idea, but requires one extra step.

Let $x\in (0,1)$ be arbitrary, and let $f$ be any element from $F\setminus\Stf(x)$. Then for any $g\in F$ we show that $g\in\langle\Stf(x), f\rangle$. Let $g$ be an arbitrary element in $F$ that does not stabilize $x$.

Denote $v=g(x)$. Then exactly one of $f(x)$ and $f^{-1}(x)$ lies on the same side of $x$ as $v$. We denote this number by $u$. Without loss of generality assume $f(x)=u$, and both $u$ and $v$ are less than $x$. We will construct an element $h\in\Stf(x)$ such that $h(u)=v$. First, we note that $(f^{-1}g)(u)=g(f^{-1}(u))=g(x)=v$. So there is some element $h_1=f^{-1}g\in F$ satisfying $h_1(u)=v$.

Pick two arbitrary dyadic rational numbers $a$ and $b$ in the following way. Since $h_1(u)=v<x$, we can choose $a$ such that
\[u\leq a<h_1^{-1}(x)\]
and then we choose $b$ satisfying
\[\max\{a,h_1(a)\}<b\leq x.\]
Since $F$ acts transitively on the pairs of dyadic rational numbers and we have simultaneously $a<b$ and $h_1(a)<b$, there is $h_2\in F$ such that $h_2(a)=h_1(a)$ and $h_2(b)=b$.

Finally, the element
\[h(t)=\left\{
\begin{array}{l}
h_1(t),\quad 0\leq t\leq a,\\
h_2(t),\quad a< t\leq b,\\
t,\quad b< t\leq 1
\end{array}\right.\]
stabilizes $x$ since $x>b$ and satisfies $h(u)=v$ because $u<a$ and $h_1(u)=v$.

Now, the element $\tilde f=fh$ belongs to $\langle\Stf(x), f\rangle$ and satisfies $\tilde f(x)=v$. Thus, for $\tilde h=g\tilde f^{-1}$ we have $\tilde h(x)=\tilde f^{-1}(g(x))=\tilde f^{-1}(v)=x$. Thus $\tilde h\in\Stf(x)$ and $g=\tilde h\tilde f\in\langle\Stf(x), f\rangle$.
\end{proof}

The above proposition naturally raises the following question.

\begin{problem}
Are there any other maximal subgroups of $F$ except the stabilizers of singletons?
\end{problem}

\section{Schreier graphs of dyadic rational numbers}
\label{sec_schreier_dyadic}

The Schreier graph of the action of $F$ on the orbit of $1/2\in[0,1]$ was completely described in~\cite{savchuk:thompson}. Note that this orbit consists of all dyadic rational numbers. Under the standard action of $F$ on the Cantor set this graph describes the action on the orbit of $1000\ldots\in X^\omega$. This orbit consists of all sequences over $X$ that contain only finitely many (but more than zero) 1's.

\begin{proposition}[\cite{savchuk:thompson}]
\label{prop_schreier_dyadic}
The Schreier graph $\Gamma$ of the action of $F$ on the orbit of $\frac12$ consisting of all dyadic rational numbers is depicted in Figure~\ref{fig_schreier_dyadic}, where the following notation is used:
\begin{itemize}
\item a vertex labelled by a finite word $v$ over $\{0,1\}$ corresponds to the point $x=0.v0^\infty\in(0,1)$ (e.g., vertex labelled by $1$ corresponds to $0.10^\infty=\frac12$);
\item black vertices correspond to dyadic rational numbers in the interval $\bigl(0,\frac12\bigr)$;
\item grey vertices correspond to dyadic rational numbers in the interval $\bigl[\frac12,\frac34\bigr)$;
\item white vertices in the tree correspond to dyadic rational numbers in the interval $\bigl[\frac34,1\bigr)$;
\item dashed arrows correspond to the action of generator $x_0$;
\item solid arrows correspond to the action of generator $x_1$.
\end{itemize}

\begin{figure}[ht]
\hspace{1cm}\epsfig{file=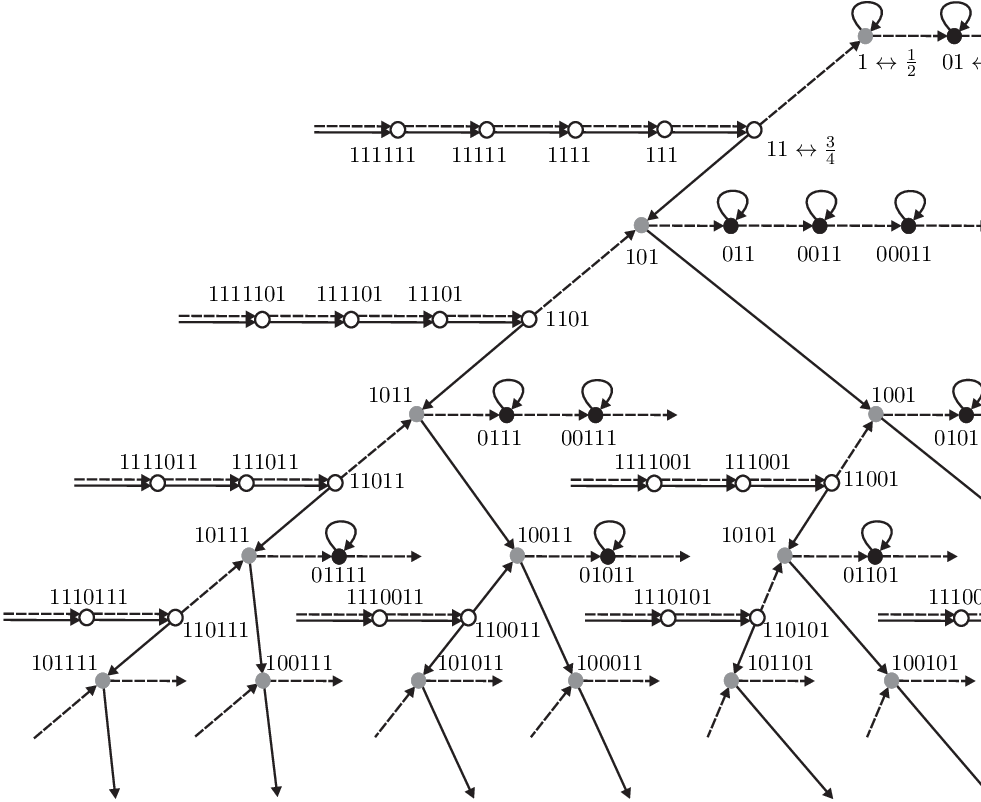,width=330pt}
\caption{Schreier graph $\Gamma$ of the action of $F$ on the orbit of $1000\ldots$\label{fig_schreier_dyadic}}
\end{figure}
\end{proposition}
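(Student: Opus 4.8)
The plan is to carry out the whole argument inside the Cantor-set model, using the explicit formulas \eqref{eqn_cantor_action} for $x_0$ and $x_1$, and to read off both the vertex set and the adjacencies combinatorially. The first task is to pin down the orbit: I would show that the orbit of $1000\ldots$ consists of exactly those sequences in $X^\omega$ containing at least one, but only finitely many, symbols $1$, i.e.\ the dyadic rationals in $(0,1)$. Transitivity of $F$ on this set is the only global input required; it is the standard fact that $F$ acts transitively on dyadic rationals in $(0,1)$, which here can be checked directly from \eqref{eqn_cantor_action}: iterating $x_0$ on $1000\ldots$ produces every $0^n10^\infty$, iterating $x_0^{-1}$ produces every $1^n0^\infty$, and from these one reaches an arbitrary finite $1$-pattern by composing suitable words in $x_0^{\pm1},x_1^{\pm1}$.

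Next I would introduce a normal form for vertices adapted to the three colour classes. The colours are determined by a bounded initial segment of the binary expansion: black for $0w$ together with $10^\infty$ (that is, $(0,\tfrac12]$), grey for $10w$ with $w\neq 0^\infty$ (that is, $(\tfrac12,\tfrac34)$), and white for $11w$ (that is, $[\tfrac34,1)$). Recording each vertex by its prefix ($0$, $10$, $110$, or $111$) together with its tail makes the action of each generator depend only on this prefix, so that the global graph becomes a finite collection of local rules applied uniformly.

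The third step is to compute $x_0^{\pm1}$ and $x_1^{\pm1}$ prefix-class by prefix-class from \eqref{eqn_cantor_action}. For instance $x_1$ fixes every black vertex (it is the identity on $[0,\tfrac12]$), maps the grey class $10w$ into itself via $10w\mapsto 100w$, sends $110w'\mapsto 101w'$ (white to grey) and $111w'\mapsto 11w'$ (white to white); while $x_0$ acts by $0w\mapsto 00w$, $10w\mapsto 01w$, and $11w\mapsto 1w$. Reading these rules off gives the local picture around every vertex, and the self-similar binary-tree pattern emerges: the white vertices assemble into a rooted binary tree under the insertion rules, while the black and grey vertices form the attached backbone, exactly as in the colour conventions of Figure~\ref{fig_schreier_dyadic}.

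Finally I would glue the local pictures into the global graph and match them against Figure~\ref{fig_schreier_dyadic}, verifying both that the normal form is a bijection onto the orbit (no two labels collide) and that the recursive pattern is correctly amended at the finitely many exceptional vertices near the root. I expect this last point to be the main obstacle. The uniform local rules are precisely the \emph{lengthening} rules, which move away from the root; at the vertices where a generator instead \emph{shortens} the prefix, namely $11w\mapsto 1w$ for $x_0$ and $111w\mapsto 11w$ for $x_1$, the prefix length drops and the generic rule breaks, and it is exactly at these vertices, together with $\tfrac12=10^\infty$ and $\tfrac14$, that the root of the white tree is welded to the backbone. These few vertices must be treated individually; everything else is a finite case check driven directly by \eqref{eqn_cantor_action}.
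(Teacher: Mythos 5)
Your overall strategy is sound and is essentially the verification that the paper itself packages into Table~\ref{tab_labeling}: the paper does not reprove this proposition (it imports the construction from~\cite{savchuk:thompson}, where the graph is built in the fraction model $k/2^n$ using the piecewise-linear definitions of $x_0,x_1$), and instead records exactly your prefix-by-prefix local rules as the dictionary between the figure and the Cantor-set action~\eqref{eqn_cantor_action}. So your route --- determine the orbit via the standard transitivity fact, classify vertices by the prefixes $0$, $10$, $110$, $111$, compute each generator on each class, and assemble --- is the Cantor-set version of the original argument, and all of your individual generator computations are correct. The one global input you lean on, transitivity of $F$ on dyadic rationals (equivalently, connectivity of the candidate graph), is stated rather than proved; note that your sketch via iterating $x_0^{\pm1}$ alone does not suffice ($x_0^{-1}$ applied to a word starting with $1$ only prepends more $1$'s), and the clean way to finish connectivity is to observe that from any grey vertex $10w$ one can reach $10\sigma(w)$ by one or two edges, hence reach $10^\infty$ after finitely many steps since $w$ has finitely many $1$'s.

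There is, however, one concrete error in your description of what the local rules assemble into: you state that \emph{the white vertices assemble into a rooted binary tree while the black and grey vertices form the attached backbone}. This has the roles of grey and white reversed. The branching nodes of the binary tree in Figure~\ref{fig_schreier_dyadic} are the \emph{grey} vertices $10\ast$: the vertex $10w$ sits directly below $10\sigma(w)$, connected either by the single edge $x_1\colon 10w'\mapsto 100w'$ (left child) or by the length-two path through the white vertex $110w'$ via $x_0\colon 110w'\mapsto 10w'$ and $x_1\colon 110w'\mapsto 101w'$ (right child). The white vertices occur only on those right-child connections and in the horizontal chains $111w\mapsto 11w$, while the black vertices form the rays accumulating at $0$. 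Since your final step is to ``match the local pictures against Figure~\ref{fig_schreier_dyadic},'' starting from the wrong expectation of which colour carries the tree structure would derail that comparison; with the roles corrected, the rest of your plan goes through, and the exceptional vertices you single out (where the prefix shortens, together with $\tfrac12=10^\infty$, the unique black vertex of the form $10w$) are indeed the right places to check by hand.
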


Note that in~\cite{savchuk:thompson} the vertices of the graph are labeled by dyadic fractions, and not by their binary expansions. However, it is more natural to label the vertices as in Figure~\ref{fig_schreier_dyadic} because this labeling agrees with the standard action of $F$ on the Cantor set specified in~\eqref{eqn_cantor_action}. Namely, the rules describing how to get the label of the vertex from the label of the neighboring vertex follow from~\eqref{eqn_cantor_action} and for convenience are described in Table~\ref{tab_labeling}.

\begin{table}
\begin{center}
\begin{tabular}{|c|c|c|c|c|}
\hline
\multicolumn{4}{|c|}{\textbf{A move in the graph}}&\multirow{2}{*}{\textbf{Change of vertex label}}\\ \cline{1-4}
initial vertex&terminal vertex&direction&generator&\\\hline
\emph{black}&\emph{black}&$\rightarrow$&$x_0$&$0w\longrightarrow00w$\\\hline
\emph{grey}&\emph{black}&$\rightarrow$&$x_0$&$10w\longrightarrow01w$\\\hline
\emph{white}&\emph{grey}&$\nearrow$&$x_0$&$110w\longrightarrow10w$\\\hline
\emph{white}&\emph{white}&$\rightarrow$&$x_0,x_1$&$111w\longrightarrow 11w$\\\hline
\emph{white}&\emph{grey}&$\swarrow$&$x_1$&$110w\longrightarrow101w$\\\hline
\emph{grey}&\emph{grey}&$\searrow$&$x_1$&$10w\longrightarrow100w$\\\hline
\emph{black}&\emph{black}&none&$x_1$&$0w\to 0w$\\\hline
\end{tabular}
\caption{Changing the vertex labels while moving along the edges in the Schreier graphs\label{tab_labeling}}
\end{center}
\end{table}

Below, we will need to address the pointed Schreier graphs. First of all, we set up the notation. The pointed Schreier graph of a dyadic rational point $q$ with selected vertex labeled by $q$ will be denoted by $\Gamma_q$ and called simply Schreier graph of $q$.

\section{Schreier graphs of irrational numbers}
\label{sec_schreier_irrational}

For any $f\in F$ we denote by $\Fix(f)$ the set of points from
$[0,1]$ stabilized by $f$. Also, for any set $A\subset\mathbb R$ we denote by $\partial A$  the boundary of $A$ with respect to standard topology of $\mathbb R$. It is a folklore that elements of $F$ can nontrivially intersect the line $y=x$ at points whose coordinates are not dyadic rationals. For example, in~\cite{kassabov_m:simultaneous_conjugacy12} Kassabov and Matucci give an example of such an element and provide more information on where these coordinates could be in a bit more general settings. We prove the following proposition mainly for the purpose of integrity.

\begin{proposition}~\\[-0.5cm]
\label{prop_rationals}
\begin{itemize}
\item[($a$)] For each $f\in F$, $\partial\Fix(f)\subset \mathbb Q$.
\item[($b$)] Each rational number belongs to $\partial\Fix(f)$ for some $f\in F$.
\end{itemize}
\end{proposition}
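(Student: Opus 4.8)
The plan is to treat the two parts separately, in each case exploiting the piecewise-linear structure of elements of $F$.

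For part ($a$), I would start from the fact that any $f\in F$ is affine on each of the finitely many intervals cut out by its breakpoints, all of which are dyadic rationals; on such an interval $f(t)=2^k t+c$ with $k\in\mathbb Z$ and $c$ dyadic (since $f$ sends the dyadic endpoints of the piece to dyadic values, and the slope is a power of $2$). Solving $f(t)=t$ piece by piece, a piece of slope $2^0=1$ is either pointwise fixed (if $c=0$) or fixed-point-free, while a piece of slope $2^k$ with $k\neq0$ meets the diagonal in the single point $t=\dfrac{c}{1-2^k}$, which is rational. Hence $\Fix(f)$ is a finite union of closed intervals (with dyadic endpoints) and isolated rational points, so its boundary $\partial\Fix(f)$ consists only of these isolated points together with the endpoints of the fixed intervals. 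All of these are rational, which proves ($a$).

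For part ($b$), given a rational $q\in(0,1)$ the idea is to manufacture an element of $F$ having $q$ as an \emph{isolated} (hence boundary) fixed point, with a germ at $q$ of slope a power of $2$ different from $1$. Writing $q=m/n$ in lowest terms and factoring $n=2^a n'$ with $n'$ odd, the key step is to choose an exponent $k\geq 1$ with $2^k\equiv 1\pmod{n'}$ (the multiplicative order of $2$ modulo $n'$ works, with $k=1$ when $n'=1$). This guarantees that $c:=q(1-2^k)$ is a dyadic rational, so the affine map $\ell(t)=2^k t+c$ fixes $q$, has slope $2^k\neq 1$, and sends dyadic points to dyadic points. Since $\ell(t)-t=(2^k-1)(t-q)$, the point $q$ is a repelling fixed point of $\ell$ with no other fixed point nearby.

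It remains to splice $\ell$ into a genuine element of $F$. Here I would fix dyadic rationals $d_1<q<d_2$ close enough to $q$ that $\ell([d_1,d_2])\subset(0,1)$, declare $f$ to equal $\ell$ on $[d_1,d_2]$, and extend it by orientation-preserving PL homeomorphisms $[0,d_1]\to[0,\ell(d_1)]$ and $[d_2,1]\to[\ell(d_2),1]$; such homeomorphisms between dyadic intervals with power-of-$2$ slopes and dyadic breakpoints always exist (subdivide both sides into standard dyadic intervals and map linearly), so the resulting $f$ lies in $F$. Near $q$ this $f$ coincides with $\ell$, so $q$ is an isolated fixed point and therefore $q\in\partial\Fix(f)$. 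The endpoint cases $q=0,1$ are handled directly by $x_0$, which fixes each endpoint but nothing in a deleted neighborhood of it. The main obstacle is precisely this last realizability issue: producing a single nontrivial affine germ fixing $q$ that extends to all of $[0,1]$ within $F$, for which the congruence $2^k\equiv 1\pmod{n'}$ forcing $c$ to be dyadic is exactly what links the arithmetic of $q$ to membership in $F$.
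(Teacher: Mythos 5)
Your proof is correct and follows essentially the same route as the paper: part ($a$) is the same observation about intersecting rational-coefficient line segments with the diagonal, and part ($b$) hinges on the identical arithmetic point that choosing $k$ with $2^k\equiv 1\pmod{n'}$ (the paper takes $k=\varphi(n')$ via Euler's theorem, you take the multiplicative order) makes the affine germ $t\mapsto 2^k t+q(1-2^k)$ dyadic-compatible. The only cosmetic difference is in the splicing step, where the paper performs a surgery on an element obtained from transitivity of $F$ on pairs of dyadic rationals while you extend directly on the two complementary dyadic intervals; these are interchangeable.
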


\begin{proof}
\noindent($a$) Each point in $\partial\Fix(f)$ is either the breakpoint of $F$, in which case it is a dyadic rational number, or it is the $x$-coordinate of the intersection of a graph of $f$ and the line $y=x$. Since the equation of all the line segments of the graph of $f$ involve only rational coefficients, both coordinates of the intersection point must be rational.

\noindent($b$) For dyadic rational numbers the claim is obvious, since these points can be the breakpoints of elements of $F$.

Let $\frac{k}{2^t\cdot l}\in [0,1]$ be arbitrary rational number so that $\gcd(k,2^t\cdot l)=1$ and $l$ is odd and greater than one. Consider a line $L$ given by the equation
\begin{equation*}
y=2^{\phi(l)}\bigl(x-\frac{k}{2^t\cdot l}\bigr)+\frac{k}{2^t\cdot l},
\end{equation*}
where $\phi$ denotes the Euler function. This line passes through $\bigl(\frac{k}{2^t\cdot l}, \frac{k}{2^t\cdot l}\bigr)$ and has a slope that is a power of $2$. On the other hand this line intersects the $x$-axis at the point $\left(\frac{k}{2^{\phi(l)+t}}\cdot\frac{2^{\phi(l)}-1}{l},0\right)$, both of whose coordinates are dyadic rational numbers since $l|(2^{\phi(l)}-1)$. Therefore, for any point of line $L$ with dyadic rational $x$-coordinate, its $y$-coordinate also will be dyadic rational. Hence, we can choose two points $P_0(x_0,y_0)$ and $P_1(x_1,y_1)$ on the line $L$, whose coordinates are dyadic rational numbers from the interval $(0,1)$ and such that the point $\bigl(\frac{k}{2^t\cdot l}, \frac{k}{2^t\cdot l}\bigr)$ lies on the segment $P_0P_1$.

\begin{figure}
\begin{center}
\epsfig{file=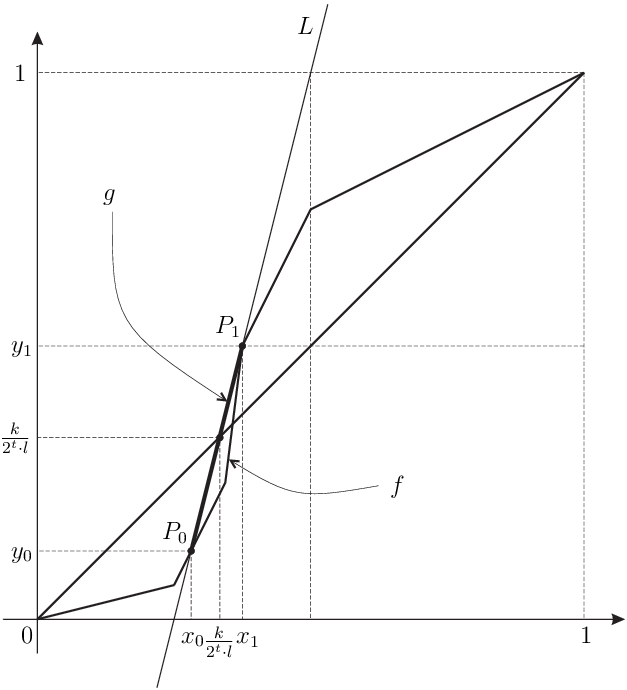, width=250pt}
\parbox{5in}{\caption{Constructing an element of $F$ which intersects the
diagonal at a given rational point\label{fig_surgery}}}
\end{center}
\end{figure}

It is well known (see~\cite{cannon_fp:intro_thompson}) that $F$ acts transitively on the pairs of dyadic rational numbers. Therefore, there is an element $f\in F$ satisfying $f(x_0)=y_0$ and $f(x_1)=y_1$. We make a surgery by cutting the piece of the graph of $f$ between $x_0$ and $x_1$ and replacing this piece by the corresponding segment of the line $L$. By doing this we construct a piecewise linear homeomorphism $g$ from $[0,1]$ to itself, which is order preserving, whose breakpoints are dyadic rational numbers and whose slopes are powers of $2$ (see Figure~\ref{fig_surgery}). In other words we get that $g$ is an element of $F$ satisfying $g\bigl(\frac k{2^t\cdot l}\bigr)=\frac k{2^t\cdot l}$ and $g(x)\neq x$ in some neighborhood of $\frac{k}{2^t\cdot l}$. Thus $\frac k{2^t\cdot l}\in \partial\Fix(g)$.
\end{proof}

Let $T$ denote the space of all regular marked pointed graphs (with marked edges and selected base vertex). This space is naturally endowed with pointed graphs topology (\cite{gromov:structures}, Chapter 3), and was used for the first time in the context of pointed graphs by Grigorchuk in~\cite{grigorch:degrees}. This topology is induced by the following metric. For $(\Gamma,x), (\widetilde\Gamma,y)\in\mathcal T$
we define
\[d(\bigl(\Gamma,x), (\widetilde\Gamma,y)\bigr)=2^{-n},\]
where $n$ is the largest integer for which balls $B_{\Gamma}(x,n)$ and $B_{\widetilde\Gamma}(y,n)$ are isomorphic as marked pointed graphs (i.e. two graphs are close in this topology if they have isomorphic balls of large radius centered at the selected points).

The proposition that follows, together with the description of Schreier graphs of dyadic rational numbers in Proposition~\ref{prop_schreier_dyadic}, allows us to describe completely the Schreier graphs of irrational numbers from $[0,1]$.

\begin{proposition}
\label{prop:psi_continuous}
Let $\psi:\ [0,1]\to\mathcal T$ be the map defined by
\[\psi(x)=\Gamma(F,\Stf(x),\{x_0,x_1\}).\]
The map $\psi$ is continuous at all irrational points of
$[0,1]$.\end{proposition}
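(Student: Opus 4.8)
### Proof Proposal

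The plan is to show that the map $\psi$ has the following continuity property at an irrational point $x$: for every radius $r$, there exists $\delta>0$ such that whenever $|x-y|<\delta$, the balls $B_{\Gamma_x}(x,r)$ and $B_{\Gamma_y}(y,r)$ are isomorphic as marked pointed graphs. Since the metric on $\mathcal T$ is defined precisely in terms of the largest such radius, this is exactly what continuity at $x$ means. Because the Schreier graph structure is governed by the local combinatorics of the action of the generators $x_0,x_1$ (and their inverses), the key observation is that the ball of radius $r$ around the base vertex in $\Gamma_y$ is determined by how the finitely many reduced words in $\{x_0^{\pm 1},x_1^{\pm 1}\}$ of length at most $r$ act on $y$, together with which coincidences $w_1(y)=w_2(y)$ occur among the images. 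I would therefore reduce the continuity statement to a statement about these finitely many functions and equalities.

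First I would fix the irrational point $x$ and the radius $r$, and enumerate the finitely many group elements $g_1,\dots,g_N$ represented by reduced words of length $\le r$ in the generators. Each $g_i$ is a piecewise linear homeomorphism of $[0,1]$, so each $g_i$ is continuous, and moreover $g_i$ has only finitely many breakpoints, all of which are dyadic rationals. The structure of $B_{\Gamma_y}(y,r)$ is recorded by two pieces of data: (i) the relative order and the ``coincidence pattern'' of the points $g_1(y),\dots,g_N(y)$, i.e.\ which of them are equal, and (ii) for each generator $s$ and each $i$, the identity of the vertex $g_i(y)^s$ among the listed points. Both of these are locally constant in $y$ away from the finitely many bad points: the coincidence pattern can only change when some $g_i(y)=g_j(y)$, equivalently $(g_i^{-1}g_j)(y)=y$, and the edge-assignment can only change when $y$ crosses a breakpoint of one of the $g_i$.

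The main obstacle, and the place where the hypothesis that $x$ is irrational is essential, is controlling exactly these two sources of discontinuity. For the breakpoints, this is immediate: every breakpoint of every $g_i$ is a dyadic rational, hence distinct from the irrational $x$, so there is a neighborhood of $x$ containing no such breakpoint. For the coincidences, I would invoke Proposition~\ref{prop_rationals}(a): the set of fixed points of any nontrivial element $h=g_i^{-1}g_j$ has boundary contained in $\mathbb Q$. Since $x$ is irrational, $x\notin\partial\Fix(h)$, so $x$ lies in the interior of either $\Fix(h)$ or its complement; in particular, the condition ``$h(y)=y$'' is locally constant at $x$, and the full coincidence pattern among the $g_i(y)$ is constant on a neighborhood of $x$. (For the finitely many $h=g_i^{-1}g_j$ that are trivial, the coincidence $g_i(y)=g_j(y)$ holds identically and causes no problem.) Intersecting the finitely many neighborhoods obtained this way yields a single $\delta>0$ on which both data (i) and (ii) are constant, so the ball $B_{\Gamma_y}(y,r)$ is canonically isomorphic to $B_{\Gamma_x}(x,r)$ for all $y$ with $|x-y|<\delta$.

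Finally I would assemble these observations into the metric estimate: for $|x-y|<\delta$ the two balls of radius $r$ agree, so $d\bigl((\Gamma_x,x),(\Gamma_y,y)\bigr)\le 2^{-r}$, and since $r$ was arbitrary this establishes continuity of $\psi$ at $x$. I expect the only delicate bookkeeping to be verifying that the abstract isomorphism of balls respects the generator labels and orientations, but this follows formally because the correspondence $g_i(x)\leftrightarrow g_i(y)$ is a bijection of vertex sets that intertwines the action of each generator, which is precisely what an isomorphism of marked pointed Schreier graphs requires.
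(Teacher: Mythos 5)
Your argument is correct and is essentially the paper's own proof: both reduce agreement of the radius-$r$ balls in $\Gamma_x$ and $\Gamma_y$ to finitely many coincidence conditions $h(y)=y$ for short elements $h\in F$, and both invoke Proposition~\ref{prop_rationals}(a) to conclude that the finitely many sets $\partial\Fix(h)$ consist of rationals and hence miss a whole neighborhood of the irrational point $x$ (the paper packages this as the finite set $D_{2n}=\bigcup_{f\in B_F(id,2n)}\partial\Fix(f)$). One inaccuracy worth fixing: your edge data $g_i(y)^s=g_j(y)$ is itself a coincidence condition, for the element $(g_is)g_j^{-1}$ of length at most $2r+1$, and it can change where $y$ crosses $\partial\Fix\bigl((g_is)g_j^{-1}\bigr)$ --- a finite set of rationals that need not contain any dyadic breakpoint of the $g_i$ --- so this case must be handled by the same $\partial\Fix$ argument rather than by avoiding breakpoints, which is precisely why the paper works with elements of length up to $2n$ instead of $n$.
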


\begin{proof}
For any element $f\in F$ the set $\partial\Fix(f)$ is finite.
Therefore the sets
\[D_n=\bigcup_{f\in B_F(id,n)}\partial\Fix(f),\]
where $id$ denotes the identity in $F$, are also finite. Note that by Proposition~\ref{prop_rationals} we have $\cup_{n\geq1}D_n=\mathbb Q$.

Let $x$ be an arbitrary irrational point from $[0,1]$. In this case $x$ does not belong to $D_{2n}$ and since $D_{2n}$ is finite there is some neighborhood $U$ of $x$ without points from $D_{2n}$. We will show that for any point $y\in U$
\begin{equation}
d(\Psi(x),\Psi(y))\leq 2^{-n}.
\end{equation}
For this we need to show that the Schreier graphs
$\Gamma(F,\Stf(x),\{x_0,x_1\})$ and $\Gamma(F,\Stf(y),\{x_0,x_1\})$
agree on the balls with radius $n$ centered at $x$ and $y$
respectively. Suppose this is not the case. Then there exist
$f,g\in F$ of length at most $n$, satisfying $f(x)=g(x)$ and $f(y)\neq g(y)$.
Therefore the element $h=fg^{-1}$ of length at most $2n$ satisfies $h(x)=x$ and $h(y)\neq y$. But that is possible only if $\partial\Fix(h)$ has nontrivial intersection with the segment connecting $x$ and $y$, which contradicts our assumption that $U\cap D_{2n}=\emptyset$ because $\partial\Fix(h)\subset D_{2n}$.
\end{proof}

In the description of Schreier graphs of points in $(0,1)$ we will refer to the the one-sided shift on $X^\omega$, denoted by $\sigma$ and defined by
\[\sigma(x_1x_2x_3\ldots)=x_2x_3x_4\ldots\]
for $x_1x_2x_3\ldots\in X^\omega$.

\begin{theorem}
\label{thm_schreier_irrational}
Each irrational number $x$ in $[0,1]$ can be uniquely written as either $0.1^n0w$ or $0.0^m1w$ for some infinite word $w$ over $X$.
The Schreier graph $\Gamma_x$ of the action of $F$ on the orbit of $x$ is depicted in Figure~\ref{fig_correspondence} (particularly for $x=0.0001w$, where $w=1001\ldots$) and has the following structure:
\begin{itemize}
\item the base vertex is labeled by $x$;
\item each vertex labeled by $10\ast$ (of a grey color) is a root of the tree hanging down from this vertex that is canonically isomorphic to the tree hanging down at the vertex $10w$;
\item an infinite path going from the vertex $10w$ upwards turns left or right at the $k$-th level, if the $k$-th letter in $w$ is 0 or 1 respectively.
\item labels of vertices agree with the standard action of $F$ on the Cantor set~\eqref{eqn_cantor_action}.
\end{itemize}
\begin{figure}
\begin{center}
\epsfig{file=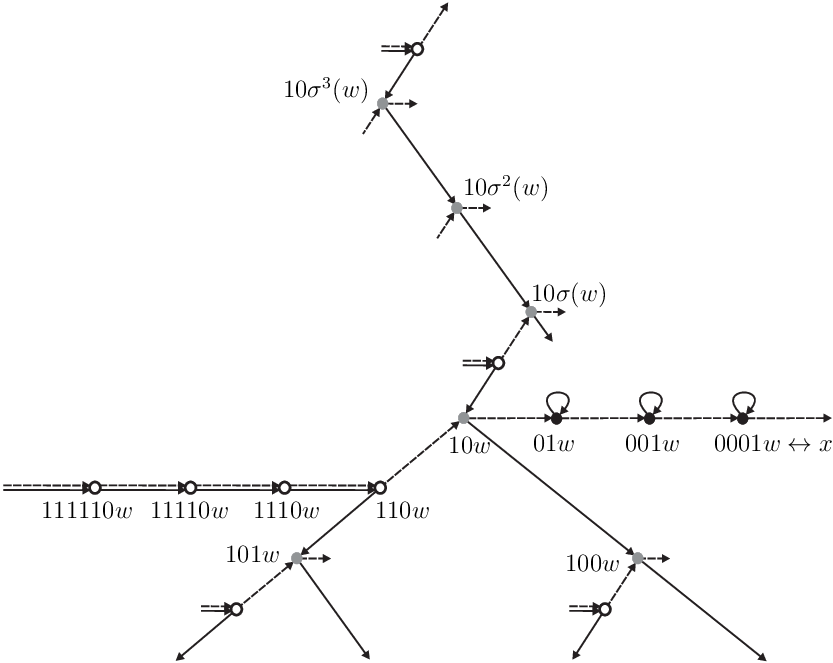,width=350pt}
\caption{Schreier graph $\Gamma_x$ of the action of $F$ on the orbit of an irrational point $x$\label{fig_correspondence}}
\end{center}
\end{figure}
\end{theorem}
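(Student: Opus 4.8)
The plan is to combine the continuity of $\psi$ at irrational points (Proposition~\ref{prop:psi_continuous}) with the explicit dyadic Schreier graph (Proposition~\ref{prop_schreier_dyadic}), realizing $\Gamma_x$ as a limit of re-based copies of the single dyadic graph. The uniqueness of the representation comes first and is elementary: since $x$ is irrational its binary expansion $x=0.a_1a_2a_3\ldots$ is unique and is neither eventually $0$ nor eventually $1$ (otherwise $x$ would be a dyadic rational), so it contains infinitely many $0$'s and infinitely many $1$'s. Reading off its initial run, it therefore begins either with $1^n0$ (if $a_1=1$) or with $0^m1$ (if $a_1=0$) for a unique $n\ge 1$ or $m\ge 1$, and the remaining tail is the infinite word $w$.

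Next I would realize $\Gamma_x$ as a limit. Let $y_k=0.a_1\ldots a_k$ be the $k$-th dyadic truncation of $x$, so that $y_k\to x$; for every $k$ large enough that $a_1\ldots a_k$ contains a $1$, the associated sequence $a_1\ldots a_k0^\infty\in X^\omega$ lies in the orbit of $\tfrac12$. By Proposition~\ref{prop:psi_continuous} we get $\psi(y_k)\to\psi(x)=\Gamma_x$, so for every radius $R$ the balls $B_{\Gamma_{y_k}}(y_k,R)$ stabilize for large $k$ and their common value is $B_{\Gamma_x}(x,R)$. Since all the $y_k$ lie in one dyadic orbit, each $\Gamma_{y_k}$ is simply the graph of Figure~\ref{fig_schreier_dyadic} pointed at the vertex labeled $a_1\ldots a_k0^\infty$; it then remains to locate that vertex and read off the stable neighbourhood as $k\to\infty$.

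For the structural read-off I would use the transition rules of Table~\ref{tab_labeling} to track the position of $a_1\ldots a_k0^\infty$: in the case $x=0.1^n0w$ it sits on the white/grey skeleton of the graph (a white vertex when $n\ge 2$, the grey vertex $10\ast$ when $n=1$), whereas in the case $x=0.0^m1w$ it sits inside a black hanging tree. In either case, descending toward the base vertex exposes at each grey vertex $10\ast$ a pendant subtree whose shape is dictated only by the prefix rules and is therefore canonically isomorphic, via prefix-relabelling, to the one at $10w$; ascending, the $\nearrow$ versus $\swarrow$ transitions (white$\to$grey via $x_0$ resp.\ via $x_1$ in Table~\ref{tab_labeling}) show that at the $k$-th level the ray branches left when $w_k=0$ and right when $w_k=1$. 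Passing to the limit $k\to\infty$, the finite ascending paths and pendant subtrees converge to the infinite ray and the infinitely many isomorphic hanging trees of Figure~\ref{fig_correspondence}, and the vertex labels converge to those prescribed by the standard action~\eqref{eqn_cantor_action}.

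The hard part will be exactly this last bookkeeping: verifying that as the basepoint descends the balls of every radius genuinely stabilize to the asserted ray-plus-hanging-trees picture rather than merely converging pointwise, matching the left/right branching of the ascending ray to the successive letters of $w$, and treating the initial $1^n0$ or $0^m1$ segment so that the root region of Figure~\ref{fig_correspondence} comes out correctly in both cases. The stabilization of balls of each fixed radius is handed to us by Proposition~\ref{prop:psi_continuous}, so the residual work is the purely combinatorial identification of the stabilized local picture with the figure.
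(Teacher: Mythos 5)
Your proposal is correct and follows essentially the same route as the paper: approximate $x$ by its dyadic truncations, invoke Proposition~\ref{prop:psi_continuous} to get convergence of the pointed Schreier graphs, and use Table~\ref{tab_labeling} (in particular the observation that the grey vertex above $10u$ is $10\sigma(u)$, sitting to the left or right according to the first letter of $u$) to identify the stabilized balls with the ray-plus-hanging-trees picture. The only cosmetic difference is that the paper truncates $x$ at the positions of its 1's so that each truncation ends in 1, which slightly streamlines the bookkeeping you flag at the end.
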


\begin{proof}
For an irrational point $x\in X^\omega$ let $m_i$ be the position of the $i$-th 1 in $x$ and let $x_{m_i}$ be the initial segment of $x$ of length $m_i$ (by construction $x_{m_i}$ ends in 1). Then since the sequence $q_i=x_{m_i}0^\infty$ converges to $x$ as $i\to\infty$, by Proposition~\ref{prop:psi_continuous} we have
\[\lim_{i\to\infty}\Gamma_{q_i}=\Gamma_x.\]
Taking longer and longer initial segments of $x$ and constructing Schreier graphs of $q_i$ (considered as a dyadic rational number) corresponds to moving the base vertex in the graph $\Gamma$ in Figure~\ref{fig_schreier_dyadic} deeper and deeper in the tree. The following observation gives a way to understand this more explicitly. It follows from Table~\ref{tab_labeling} that if the label of a grey vertex $v$ in $\Gamma$ is $10w$, then the label of the grey vertex $v'$ right above $v$ (i.e. the closest to $v$ grey vertex located above $v$) is $10\sigma(w)$. Moreover, if the first letter of $w$ is 0, then $v'$ is to the left of $v$, and if it is 1, then $v'$ is to the right of $v$.

Now, suppose that $x=1^n0u1w$ (or $x=0^n1u1w$) for some $u\in X^*$ and $w\in X^\omega$ such that the letter 1 between $u$ and $w$ in $x$ is at position $m_i$. Then $x_{m_i}=1^n0u1$ and the geodesic connecting the base vertex $q_i$ with the root of the tree (vertex $101$) can be described as follows. First one has to move along the line of white vertices to the right (or black vertices to the left) until the vertex $10u1$ of the tree is reached. After this the geodesic goes into the tree containing grey vertices and is represented by the unique path from $10u1$ to $101$. By the observation in the end of previous paragraph, moving along this path from bottom to top corresponds to applying powers of $\sigma$ to $u1$. And this path will turn left or right on the $k$-th step depending on the first letter of $\sigma^{k-1}(u1)$, which is exactly the $k$-th letter in $u1$ and in $x$.

When we increase $i$, we increase the length of the path joining the base vertex $q_i$ with the root of the tree. By construction, when we take a limit of $\Gamma_{q_n}$, we obtain precisely the graph $\Gamma_x$ described in the statement of the theorem. The labels of the vertices of $\Gamma_x$ are defined as the limits of the labels of the vertices in $\Gamma_{q_i}$. In particular, as $\lim_{i\to\infty}q_i=x$, the label of the selected vertex in $\Gamma_x$ is $x$. Since the labels of vertices of $\Gamma_{q_i}$ agree with the standard action~\eqref{eqn_cantor_action} of $F$ on the Cantor set, the labels of vertices in $\Gamma_x$ will also possess this property. Therefore, for each vertex $v$ of $\Gamma_x$ the label of $v$ is just an image of $x$ under the action of elements of $F$ corresponding to any paths joining $x$ and $v$.
\end{proof}

\section{Closure of the space of Schreier graphs of dyadic rational numbers}
\label{sec_closure}

By changing the selected point in the Schreier graph depicted in Figure~\ref{fig_schreier_dyadic} we obtain a family $\mathcal C$ of pointed graphs. Thompson's group $F$ acts on this set by a shift of the base point according to the labels on the edges. This action of $F$ on the set of pointed Schreier graphs is called a \emph{Schreier dynamical system} associated to the orbit of $\frac12$ (see Section~8 in~\cite{grigorch:dynamics11}). By continuity it can be extended to the closure $\overline{\mathcal C}$
of the set $\mathcal C$ in the topology described above. With a slight abuse of notation we will call the last action also by Schreier dynamical system. Below we will study this action. But first we will describe the structure of the set $\overline{\mathcal C}$.

\begin{proposition}
\label{prop_closure}
The closure $\overline{\mathcal C}$ of the family of Schreier graphs of dyadic rational numbers in $[0,1]$ contains three types of graphs:
\begin{itemize}
\item[A.] Schreier graphs of dyadic rational numbers shown in Figure~\ref{fig_schreier_dyadic} with any point as a base point;
\item[B.] graphs in Figure~\ref{fig_correspondence} with an infinite path going up and turning left or right at every level, where the base vertex has a label either $1^n0w$ or $0^n1w$ for $w\in X^\omega$;
\item[C.] the two graphs shown in Figure~\ref{fig_closure} that are quasi-isometric to a line.
\end{itemize}
\begin{figure}
\begin{center}
\epsfig{file=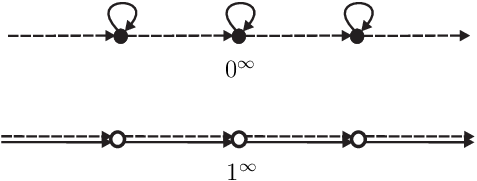,width=180pt}
\caption{Fixed points in the space of pointed Schreier graphs\label{fig_closure}}
\end{center}
\end{figure}
\end{proposition}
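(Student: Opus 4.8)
The plan is to characterize $\bar{\mathcal C}$ as the set of all limits of sequences of pointed Schreier graphs $(\Gamma, p_i)$, where $\Gamma$ is the fixed graph of Figure~\ref{fig_schreier_dyadic} and $p_i$ ranges over its vertices (dyadic rationals). A limit in the pointed graphs topology is determined by the isomorphism type of larger and larger balls $B(p_i, n)$ around the moving basepoint, so the entire argument reduces to understanding what the ball of radius $n$ around a vertex of $\Gamma$ can look like, as the basepoint is pushed in various directions. I would organize the proof around the structure of $\Gamma$ from Proposition~\ref{prop_schreier_dyadic}: it consists of a bi-infinite horizontal line of white/black vertices (the ``spine'') with binary trees of grey vertices hanging below the grey vertices on it.

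First I would handle the three cases according to where the sequence of basepoints $p_i$ accumulates. If the $p_i$ eventually stabilize at a fixed vertex, or wander only within a bounded region, the limit is just $(\Gamma, p)$ for some vertex $p$, giving type A; here one checks that every vertex of $\Gamma$ does arise as a genuine accumulation and that these limits are exactly the graphs of Figure~\ref{fig_schreier_dyadic} with an arbitrary base point. Next, if the $p_i$ escape to infinity \emph{into} one of the grey trees (equivalently, the $p_i$ are dyadic rationals whose binary labels have initial segments converging to a one-sided infinite word in $X^\omega$), then by the label-tracking computation already carried out in the proof of Theorem~\ref{thm_schreier_irrational} the balls around $p_i$ converge to the graph of Figure~\ref{fig_correspondence}, with a single infinite upward path turning left/right according to the limiting word and with base label $1^n0w$ or $0^n1w$; this yields type B. The key point distinguishing B from the irrational case is that here $w\in X^\omega$ is arbitrary (in particular may be rational/preperiodic), because we are no longer constrained to come from a single fixed irrational $x$ but merely from \emph{some} convergent sequence of dyadic labels.

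The remaining case, which I expect to be the main obstacle, is when the $p_i$ escape to infinity \emph{along the spine} rather than into a tree. Then every finite ball $B(p_i,n)$ eventually sees only a long stretch of the horizontal line together with finitely many tree-tops hanging off it, and in the limit one loses the distinguished root structure of the trees. I would argue that such a limit is quasi-isometric to a line and is exactly one of the two graphs of Figure~\ref{fig_closure}, the two arising according to whether the basepoint runs off to the right (through white vertices) or to the left (through black vertices). The delicate part is verifying that these two limits are genuinely in $\bar{\mathcal C}$ and are distinct from each other and from all type A and B graphs, which amounts to checking that the local picture around the basepoint in Figure~\ref{fig_closure} really is the stable value of $B(p_i,n)$ for large $i$ along a spine-escaping sequence, and that no finite ball around a spine vertex ever contains a complete hanging tree root in the limit. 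Finally, I would confirm completeness of the classification by observing that these three escape behaviors (bounded, into-a-tree, along-the-spine) exhaust the possibilities for a sequence of vertices in the locally finite graph $\Gamma$, so that $\bar{\mathcal C}$ contains no graphs beyond types A, B, and C.
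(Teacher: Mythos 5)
Your proposal is correct and takes essentially the same route as the paper: the paper likewise shows each of the three types arises as a limit of pointed graphs $(\Gamma,q_n)$ (using $\{\Gamma_{0^n}\}$ and $\{\Gamma_{1^n}\}$ for type C), and then argues conversely that convergence forces the labels $q_n$ to share longer and longer prefixes, which via the label-tracking observation from the proof of Theorem~\ref{thm_schreier_irrational} pins the limit to one of the three types. Your trichotomy of basepoint escape behaviors is just a repackaging of that same argument, so no substantive difference.
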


Note for the graphs of type C changing the base vertex does not change the isomorphism type of the pointed graph, so each vertex can be considered as a base vertex. Also, it deserves mentioning that the class of graphs of type B includes all Schreier graphs of irrational numbers, but it also includes those graphs in which the infinite path going upwards is eventually periodic. As we will see below these graphs are not Schreier graphs of rational points.

\begin{proof}
First of all, it is easy to see that all of these graphs belong to the closure. Graphs of type A are in $\mathcal C$. The Schreier graphs of irrational points are constructed as limits of graphs in $\mathcal C$. Similarly, the graphs of type B with an eventually periodic infinite ray going up can be approximated by graphs in $\mathcal C$ exactly in the same way. Finally, the graphs of type C are limits of the sequences $\{\Gamma_{0^n}\}_{n\geq1}$ and $\{\Gamma_{1^n}\}_{n\geq1}$ of graphs in $\mathcal C$.

On the other hand, if $\Gamma'$ is a graph in $\overline{\mathcal C}$ that is a limit of a sequence of graphs $\{\Gamma_{q_n}\}_{n\geq1}$ in $\mathcal{C}$, then the graphs $\Gamma_{q_n}$ have to have bigger and bigger balls centered at $q_n$ that are isomorphic. Therefore, taking into account the observation in the proof of Theorem~\ref{thm_schreier_irrational} relating the letters in $q_n$ and the structure of $\Gamma_{q_n}$, the points $q_n$ must have longer and longer common beginnings. This, in turn, forces $\Gamma'$ to be in one of the classes described in the formulation of the theorem.

Similarly to the proof of Theorem~\ref{thm_schreier_irrational} we can define the vertex labels for each graph $\Gamma'$ of type B as the limits of vertex labels of the graphs in $\mathcal C$ that approach this graph. In view of the argument in the previous paragraph, these labels do not depend on the sequence used to approach $\Gamma'$. Again, since the labels of vertices of graphs of type B agree with the standard action~\eqref{eqn_cantor_action} of $F$ on the Cantor set, the labels of vertices in $\Gamma'$ will also have this property. Hence, for each vertex $v$, its label represents the image of the label of the base vertex under the action of an element of $F$ corresponding to any paths connecting the base vertex to $v$.

The graph of type B where the selected vertex has label $w$ will be denoted by $\widetilde\Gamma_w$ (so that for irrational $w$ we have $\widetilde\Gamma_w=\Gamma_w$). We will also denote the graphs of type C by $\widetilde\Gamma_{0^\infty}$ (the top one) and $\widetilde\Gamma_{1^\infty}$ (the bottom one).

Finally, if $w\neq w'$ then $\widetilde\Gamma_w$ is not isomorphic to $\widetilde\Gamma_w'$ as pointed graphs. Indeed, if the first digit where $w$ differs from $w'$ is at position $m$, then these graphs have non-isomorphic $(2m-5)$-neighborhoods of the selected points.
\end{proof}

To emphasize the difference between $\widetilde\Gamma_w$ and $\Gamma_w$ for a rational $w$ we note that in this case there are multiple vertices in $\widetilde\Gamma_w$ with the same label. This fact will allow us in Section~\ref{sec_schreier_rational} to construct the Schreier graph $\Gamma_w$ of a rational point $w$ by gluing all vertices with the same labels in the graph $\widetilde\Gamma_w$.

In order to better understand the action of $F$ on $\overline{\mathcal C}$ we give more details about the structure of $\overline{\mathcal C}$. It turns out that the space $\overline{\mathcal C}$ is not perfect, i.e. it does not coincide with the set of its limit points. The ordinal that shows how different the topological space $X$ is from the perfect space is called the \textit{Cantor-Bendixson rank} of $X$. It is defined as the ``number'' of times one has to throw away isolated points in $X$ to get a perfect set. The perfect set that is obtained in the end of this procedure is called the \textit{perfect kernel} of $X$.

The following proposition answers Question~6.1 in~\cite{grigorch:dynamics11} for the space of pointed Schreier graphs of $F$, and allows us to look at the standard action of $F$ on the Cantor set from different perspective.

\begin{proposition}
\label{prpo_cantor_bendixson}
The Cantor-Bendixson rank of $\overline{\mathcal C}$ is equal to 1. The perfect kernel $\mathcal D$ of the set $\overline{\mathcal C}$ consists of graphs of type B and C from Proposition~\ref{prop_closure}, and is homeomorphic to the Cantor set.
\end{proposition}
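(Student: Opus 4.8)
The plan is to establish the two claims separately: first that the Cantor--Bendixson rank is exactly $1$, which amounts to showing that graphs of type A are precisely the isolated points of $\bar{\mathcal C}$ and that the remaining set (types B and C) is perfect; and second that this perfect kernel $\mathcal D$ is homeomorphic to the Cantor set. For the rank computation, I would first verify that each graph of type A is isolated. A graph of type A is a Schreier graph of a dyadic rational with a chosen base point; its defining feature is that the base vertex sits at bounded distance from the ``top'' of the tree (the line of white and black vertices), since a dyadic rational $0.w10^\infty$ or $0.w01^\infty$ has only finitely many nonzero (resp.\ zero) tail digits. By the ball-isomorphism criterion defining the metric $d$, a sufficiently large ball around the base point sees this finite top structure, and no graph of type B or C has an isomorphic ball of that radius; hence type A graphs are isolated. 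Conversely I must show that every graph of type B or C is a limit point, i.e.\ not isolated.

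For the ``not isolated'' direction I would exhibit, for each type B graph $\tilde\Gamma_w$, a sequence of distinct graphs in $\bar{\mathcal C}$ converging to it. The natural choice is to perturb the infinite upward path: using the labeling correspondence from Proposition~\ref{prop_closure} (labels $1^n0w$ or $0^n1w$), one approximates $\tilde\Gamma_w$ by graphs $\tilde\Gamma_{w'}$ whose defining word $w'$ agrees with $w$ on longer and longer prefixes but differs eventually; by the final remark in the proof of Proposition~\ref{prop_closure}, agreement of $w$ and $w'$ up to position $m$ forces isomorphic $(2m-5)$-balls, so $d(\tilde\Gamma_w,\tilde\Gamma_{w'})\to 0$ while the graphs remain distinct. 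The same prefix-truncation argument handles the type C graphs $\tilde\Gamma_{0^\infty}$ and $\tilde\Gamma_{1^\infty}$, which are approached by type B graphs whose upward rays have longer and longer all-left (resp.\ all-right) initial segments. This shows types B and C form a set with no isolated points, and since removing the isolated type A points yields exactly this set, the rank is $1$ and the perfect kernel is $\mathcal D = \{\text{types B and C}\}$.

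For the homeomorphism with the Cantor set, the cleanest route is to produce an explicit continuous bijection from a Cantor set onto $\mathcal D$ and invoke compactness. The parametrization from Proposition~\ref{prop_closure} suggests coding each graph of type B by the pair (choice of $0$-side or $1$-side, infinite word $w\in X^\omega$), with the two type C graphs as the two ``degenerate'' limits. I would therefore define a map from a suitable symbolic space onto $\mathcal D$ sending a coding sequence to the graph whose upward path turns according to that sequence, check it is a continuous bijection using the metric estimate $d(\tilde\Gamma_w,\tilde\Gamma_{w'})\leq 2^{-(2m-5)}$ when $w,w'$ share an $m$-prefix, and then note that $\mathcal D$ is closed in the compact space $\bar{\mathcal C}$ hence compact, so a continuous bijection from a compact space is a homeomorphism. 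Finally I would confirm $\mathcal D$ is totally disconnected and perfect, the defining topological properties characterizing the Cantor set by Brouwer's theorem; perfectness is already furnished by the rank computation, and total disconnectedness follows because distinct graphs are separated by the clopen condition of having a prescribed finite ball.

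The main obstacle I anticipate is the bookkeeping around the identification map and the exact ball radii: one must pin down precisely which graphs of type B are distinguished from type A by their finite balls, and verify that the coding map is genuinely injective on $\mathcal D$ rather than merely on the irrational part. The subtlety is that type B includes graphs whose upward ray is eventually periodic (labels $1^n0w$ with rational-looking $w$), and these are \emph{not} Schreier graphs of rational points — as the excerpt stresses — so the coding must track the upward-ray data faithfully and not collapse such graphs. Getting the separation radius right (the paper's $2m-5$) and confirming that this same bound cleanly forces both continuity and the homeomorphism conclusion is where the care is needed; the topological conclusion via Brouwer's characterization is then essentially automatic.
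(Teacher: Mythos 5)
Your overall strategy coincides with the paper's: show that the type A graphs are isolated by means of a distinctive finite configuration near the top of the tree, show that types B and C are accumulation points by perturbing the upward ray beyond longer and longer prefixes, and identify $\mathcal D$ with $X^\omega$ through the labels of the base vertices. The approximation argument and the Cantor-set identification are essentially what the paper does (the paper simply uses the map sending a pointed graph to the label of its selected vertex and notes it is a bicontinuous bijection; your coding map is this read backwards, and the detour through compactness or Brouwer's characterization is harmless, if redundant).

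The one genuine gap is in the isolation argument for type A. You rule out the possibility that a large ball around the base point of a type A graph is isomorphic to a ball in a graph of type B or C, and then conclude ``hence type A graphs are isolated.'' But isolation also requires that no \emph{other type A graph} --- that is, the same Schreier graph $\Gamma$ of the dyadic orbit with a \emph{different} base vertex $q'$ --- is close to $(\Gamma,q)$. Merely seeing the finite top structure inside the ball is not enough for that: you must also know that the position of the base vertex relative to this structure is determined, i.e.\ that an isomorphism of pointed balls $B_\Gamma(q,R)\cong B_\Gamma(q',R)$, both containing the top, forces $q=q'$. The paper supplies exactly this missing ingredient: it exhibits the induced subgraph on $V=\{1,11,111\}$, observes that it is not isomorphic to any other three-vertex induced subgraph of $\Gamma$ (nor of any graph of type B or C), and --- crucially --- that the identity map on this subgraph extends \emph{uniquely} to a marked-graph automorphism of $\Gamma$; this rigidity is what pins down the base vertex and makes the $2^{-R}$-neighborhood of $(\Gamma,q)$ a singleton. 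Your argument needs this extra rigidity step (or some equivalent) to be complete; with it, the rest of your plan goes through as in the paper.
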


\begin{proof}
First of all we show that the graphs of type A are isolated points in $\overline{\mathcal C}$. Indeed, let $\Gamma_q$ be an arbitrary graph of type A. This means that it is a Schreier graph of some dyadic rational point $q$. The set of vertices $V=\{1,11,111\}$ in $\Gamma$ has the property that the subgraph $\Gamma_V$ of $\Gamma$ induced by $V$ (depicted in Figure~\ref{fig_unique_subgraph}) is not isomorphic to any other subgraph of $\Gamma$, or any subgraph of graphs of type B or C, spanned by 3 vertices. So each automorphism of $\Gamma$ fixes the vertices in $V$.  Moreover, the identity map on $\Gamma_V$ uniquely extends to the trivial automorphism of $\Gamma$ as of a marked graph. Thus, the automorphism group of $\Gamma$ as of an abstract marked graph is trivial. Therefore, if we take a ball in $\Gamma_q$ centered at $q$ of a radius $R$ that is sufficiently big to include set $V$, it will not be isomorphic to any other ball of the same radius in any other graph in $\overline{\mathcal C}$. Thus, the $2^{-R}$-neighborhood of $\Gamma_q$ in $\overline{\mathcal C}$ contains only $\Gamma_q$ and, hence, $\Gamma_q$ is an isolated point.

\begin{figure}
\begin{center}
\epsfig{file=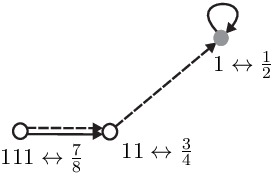, width=100pt}\\
\parbox{5in}{\caption{The induced subgraph $\Gamma_V$ of $\Gamma$ not isomorphic to any other subgraph spanned by 3 vertices\label{fig_unique_subgraph}}}
\end{center}
\end{figure}

On the other hand, let $\widetilde\Gamma_w$ be a graph in $\overline{\mathcal C}$ of type B. Let $w'$ be an irrational string such that $w'\neq \sigma^n(w)$ for all $n\geq0$. For any $n\geq 0$ define
\[v_n=w_nw'\in X^\omega,\]
where $w_n$ denotes the initial segment of $w$ of length $n$. Then for the sequence $\{\widetilde\Gamma_{v_n}\}$ of graphs of type B, by construction of graphs in $\overline{\mathcal C}$ we have
\[\lim_{n\to\infty}\widetilde\Gamma_{v_n}=\widetilde\Gamma_w.\]
Therefore, $\widetilde\Gamma_w$ is not an isolated point in $\overline{\mathcal C}$.

The same argument applied to graphs of type $C$ shows that the set of isolated points in $\overline{\mathcal C}$ precisely coincides with the set of pointed graphs of type A. Moreover, as shown above, the graphs approximating graphs of types B and C can be chosen to be of type B. Thus, the set $\mathcal D$ obtained from $\overline{\mathcal C}$ by removing graphs of type A does not contain isolated points, and, hence, is a perfect kernel of $\overline{\mathcal C}$.

Finally, the homeomorphism between $\mathcal D$ and the Cantor set $X^\omega$ is given by sending the pointed graph to the label of the selected vertex. This map is clearly bijective and continuous by construction of graphs in $\mathcal D$.
\end{proof}

\begin{theorem}
\label{cor_action_conjug}
The action of $F$ on $\mathcal D$ induced by the Schreier dynamical system is topologically conjugate to the standard action of $F$ on the Cantor set $X^\omega$.
\end{theorem}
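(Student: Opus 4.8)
The key object is already in hand: by Proposition~\ref{prpo_cantor_bendixson}, the map $\Phi\colon \mathcal D\to X^\omega$ sending a pointed graph in the perfect kernel to the label of its selected vertex is a homeomorphism. So the entire content of the theorem is that $\Phi$ \emph{intertwines} the two actions, i.e. that for every generator $s\in\{x_0,x_1\}$ and every graph $\tilde\Gamma_w\in\mathcal D$ (including the type~C graphs $\tilde\Gamma_{0^\infty}$ and $\tilde\Gamma_{1^\infty}$) we have $\Phi(\tilde\Gamma_w\cdot s)=\Phi(\tilde\Gamma_w)\cdot s=w^s$, where $w^s$ is computed by the standard formulas~\eqref{eqn_cantor_action}. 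Since both actions are generated by $x_0,x_1$, checking the generators suffices, and then $\Phi$ being an equivariant homeomorphism is exactly a topological conjugacy.

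\textbf{Key steps, in order.}
First I would recall precisely how $F$ acts on $\bar{\mathcal C}$: the action shifts the base point along the labeled edges, so applying $s$ to a pointed graph means moving the selected vertex across the $s$-labeled edge emanating from it, leaving the underlying graph unchanged. Second, I would invoke the labeling property established in the proofs of Theorem~\ref{thm_schreier_irrational} and Proposition~\ref{prop_closure}: the vertex labels of every graph of type~B (and~C) agree with the standard action~\eqref{eqn_cantor_action}, meaning that the label of the vertex reached from the base vertex across the $s$-edge is exactly the image of the base label under $s$ in the standard action. This is the crux: it says that moving the base point along the $s$-edge and reading off the new label does the same thing to $w$ as the standard generator $s$ does. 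Combining these two observations gives, directly, $\Phi(\tilde\Gamma_w\cdot s)=w^s=\Phi(\tilde\Gamma_w)\cdot s$, which is the required equivariance on generators and hence on all of $F$. Third, I would note that $\Phi$ is $F$-equivariant and a homeomorphism (the latter by Proposition~\ref{prpo_cantor_bendixson}), and that $\mathcal D$ is $F$-invariant because the shift action permutes graphs of types~B and~C among themselves; this is precisely the definition of topological conjugacy.

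\textbf{Points needing care.}
The main obstacle is not any single hard estimate but rather the bookkeeping at the boundary of the type-B family, and in particular the type~C graphs. One must verify that each generator's edge out of the base vertex is actually present and that the relevant edge always stays inside $\mathcal D$ (i.e. that $\tilde\Gamma_w\cdot s$ is again of type~B or~C, never of type~A), so that the shift genuinely restricts to an action on $\mathcal D$. For the type~C graphs the labels $0^\infty$ and $1^\infty$ are fixed by the standard action, and I would check that these two graphs are correspondingly fixed by the shift, consistently with $\Phi$. A subtle point worth stating explicitly is that equivariance must be verified for the \emph{base-point-shifting} description of the dynamical system rather than the conjugation-on-subgroups description; since the two are equivalent (as noted in the Introduction), it is cleanest to work throughout with the base-point shift, where the label-tracking of~\eqref{eqn_cantor_action} makes the intertwining essentially immediate.
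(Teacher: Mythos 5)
Your proposal is correct and follows essentially the same route as the paper: it uses the homeomorphism $\mathcal D\to X^\omega$ from Proposition~\ref{prpo_cantor_bendixson} (pointed graph $\mapsto$ label of its base vertex) and the fact, established in Proposition~\ref{prop_closure}, that vertex labels agree with the standard action~\eqref{eqn_cantor_action}, to conclude equivariance. Your extra attention to the type~C graphs and to the invariance of $\mathcal D$ is a reasonable amplification of details the paper leaves implicit, but it is not a different argument.
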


\begin{proof}
Let $\psi\colon \mathcal D\to X^\omega$ be a homeomorphism defined in the proof of Proposition~\ref{prpo_cantor_bendixson} that sends the pointed graph in $\mathcal D$ to the label of its selected vertex. Let also $\widetilde\Gamma_w\in\mathcal D$ and $f\in F$ be arbitrary. Then, as was proven in Proposition~\ref{prop_closure}, the label of the selected vertex of $\widetilde\Gamma_w^f$ is equal to the image of $w$ under $f$. Therefore,
\[\psi\bigl(\widetilde\Gamma_w\bigr)^f=w^f=\psi\bigl(\widetilde\Gamma_w^f\bigr).\]
Thus, the actions of $F$ on $\mathcal D$ and on $X^\omega$ are topologically conjugate.\end{proof}

\section{Schreier graphs of rational numbers and applications}
\label{sec_schreier_rational}

Theorem~\ref{cor_action_conjug} tells us that we have not construct any new action of $F$ on the Cantor set. However it gives us more information about this action. Namely, it allows us to describe the Schreier graphs of the action of $F$ on the orbit of every point of $X^{\omega}$.

\begin{figure}[h]
\begin{center}
\epsfig{file=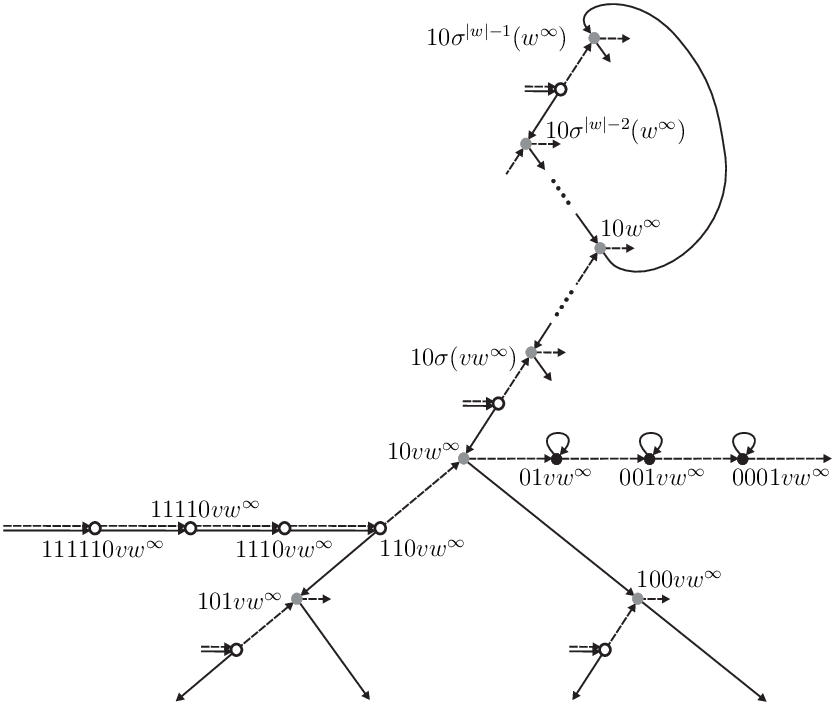,
width=350pt}
\parbox{5in}{\caption{Schreier graph of the action of $F$ on the orbit of a rational point of a Cantor set\label{fig_correspondence_rational}}}
\end{center}
\end{figure}

\begin{theorem}
\label{thm_schreier_rational}
Each rational point $x$ of the Cantor set $X^\omega$ except $000\ldots$ and $111\ldots$ can be uniquely written as either $x=1^n0vwww\ldots$ or $x=0^m1vwww\ldots$ for some $v,w\in X^*$ such that $w$ is not a proper power and the ending of $v$ differs from the one of $w$. The Schreier graph $\Gamma_x$ of the action of $F$ on the orbit of $x$ is depicted in Figure~\ref{fig_correspondence_rational} and has the following structure:
\begin{enumerate}
\item the base point is labeled by $x$;
\item each vertex labeled by $10u$ for $u\neq \sigma^{i}(w^\infty)$ is a root of the tree hanging down from this vertex that is canonically isomorphic to the tree hanging down at the vertex $101$ in the Schreier graph $\Gamma$ of $\frac12$ shown in Figure~\ref{fig_schreier_dyadic};
\item a path connecting vertices $10vw^\infty$ and $10\sigma^{|w|-1}(w^\infty)$ turns left or right at the $k$-th level, if the $k$-th letter in $vw$ is 0 or 1 respectively.
\item if the first letter of $w$ is 0 then there is an edge from the vertex $10w^\infty$ to the vertex $10\sigma^{|w|-1}(w^\infty)$ labeled by $x_1$ (see Figure~\ref{fig_correspondence_rational});
\item if the first letter of $w$ is 1 then the vertex $110w^\infty$ is adjacent to vertices $10w^\infty$ and $10\sigma^{|w|-1}(w^\infty)$ via edges labeled by $x_0$ and $x_1$ respectively (see the example in Figure~\ref{fig_schreier13});
\item the path in the third item and the edge or the path in the previous two items create a loop in the Schreier graph corresponding to $w$ which we will call a \emph{nontrivial loop};
\item each vertex adjacent to the grey vertex in the nontrivial loop, and not belonging to this loop is either
\begin{itemize}
\item the beginnings of geodesics isomorphic to geodesic $(1,01,001,\ldots)$ in $\Gamma$, or
\item the root of the tree hanging down and to the left at the vertex $11$ in $\Gamma$, or
\item the root of the tree hanging down and to the right at the vertex $101$ in $\Gamma$.
\end{itemize}
\end{enumerate}
\end{theorem}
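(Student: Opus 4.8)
The plan is to realize the Schreier graph $\Gamma_x$ as a quotient of the type~B limit graph $\tilde\Gamma_x$ from Proposition~\ref{prop_closure}, obtained by gluing together all vertices carrying the same label. First I would record the normal form: every rational point of $X^\omega$ other than $0^\infty$ and $1^\infty$ is eventually periodic, and writing it as $1^n0vw^\infty$ or $0^m1vw^\infty$ with $w$ not a proper power and the last letter of $v$ different from that of $w$ pins down $w$ as the primitive root of the period and $v$ as the exact (minimal) preperiod. Uniqueness of this data is the standard uniqueness of minimal period and minimal preperiod of an eventually periodic word: primitivity of $w$ fixes the period length, and the hypothesis on the ending of $v$ is precisely the normalization preventing the last letter of $v$ from being absorbable into the period.

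The conceptual core is the gluing step. By Proposition~\ref{prop_closure} the vertex labels of $\tilde\Gamma_x$ agree with the standard action~\eqref{eqn_cantor_action}, so the map $\phi$ sending each vertex of $\tilde\Gamma_x$ to its label is a label-preserving graph morphism onto the orbit Schreier graph $\Gamma_x$: the base vertex maps to $x$, and an $s$-edge leaving a vertex labeled $y$ lands on the vertex labeled $y^s$. Since $\tilde\Gamma_x$ is connected and, being a limit of Schreier graphs, has a unique incoming and a unique outgoing edge for each generator at every vertex, $\phi$ descends to an isomorphism between the quotient $\tilde\Gamma_x/{\sim}$ (identifying vertices with equal labels) and $\Gamma_x$; this is exactly the gluing announced in the remark following Proposition~\ref{prop_closure}. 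Thus describing $\Gamma_x$ reduces to determining the fibers of $\phi$, i.e.\ the label coincidences in $\tilde\Gamma_x$.

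I would then compute these coincidences. Along the grey spine the labels are $10\sigma^k(vw^\infty)$, and for $j\neq k$ the equality $\sigma^j(vw^\infty)=\sigma^k(vw^\infty)$ holds precisely when $j,k\geq |v|$ and $j\equiv k\pmod{|w|}$; here primitivity of $w$ guarantees that the $|w|$ words $\sigma^i(w^\infty)$ with $0\le i<|w|$ are pairwise distinct, and the condition that the ending of $v$ differs from that of $w$ guarantees that no shift inside the preperiod coincides with a shift of the period, so that $v$ is genuinely the non-repeating prefix. Consequently gluing collapses the eventually periodic upward path into a loop through the $|w|$ grey vertices $10\sigma^i(w^\infty)$, preceded by the finite path starting at $10vw^\infty$; by Theorem~\ref{thm_schreier_irrational} this path turns left or right at level $k$ according to the $k$-th letter of $vw$, which is items~(3) and~(6). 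For the remaining vertices one checks that the labels are pairwise distinct: the downward trees hanging off distinct grey vertices carry disjoint sets of labels, each label determining the grey vertex it descends from, so off the cycle the picture is identical to the dyadic graph of Proposition~\ref{prop_schreier_dyadic}, giving item~(2), while item~(7) records which neighbours of the on-cycle grey vertices survive the gluing. Finally, items~(4) and~(5) follow by reading off from Table~\ref{tab_labeling} how the top spine vertex $10w^\infty=10\sigma^{|w|}(w^\infty)$ attaches to $10\sigma^{|w|-1}(w^\infty)$: when the first letter of $w$ is $0$ this ascent is a single $x_1$-edge, and when it is $1$ the ascent passes through the white vertex $110w^\infty$, contributing the $x_0$- and $x_1$-edges of item~(5), and in either case closes the nontrivial loop, producing Figure~\ref{fig_correspondence_rational}.

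The main obstacle is the bookkeeping in the third paragraph, namely proving that the only label coincidences are those on the periodic cycle. This is exactly where both normal-form hypotheses are indispensable: without primitivity of $w$ the collapsed loop would have the wrong length, and without the condition on the ending of $v$ the preperiod would partially merge into the loop, so one must verify carefully that no tree vertex is ever identified with a spine vertex or with a vertex of another tree. Once this is settled, the rest of the argument is the routine translation of these identifications into the labelled picture using the local moves of Table~\ref{tab_labeling}.
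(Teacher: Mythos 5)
Your proposal is correct and follows essentially the same route as the paper's proof: realize $\Gamma_x$ as the quotient of the type-B limit graph $\tilde\Gamma_x$ by identifying equally labeled vertices (the paper justifies this step via Theorem~\ref{cor_action_conjug}, you via the label-preserving morphism, which amounts to the same thing), then locate the coincidences as the period-$|w|$ repetitions along the grey spine past the preperiod $v$, and read off the loop-closing edges from Table~\ref{tab_labeling}. Your third paragraph merely spells out in more detail the injectivity of labels off the cycle, which the paper leaves implicit.
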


\begin{proof}
By Theorem~\ref{cor_action_conjug} the Schreier graph $\Gamma_x$ of a rational point $x\in X^\omega$ coincides with the Schreier graph of the action of $F$ on the orbit of $\widetilde\Gamma_x$ in $\mathcal{D}$. Therefore, it is obtained from $\widetilde\Gamma_x$  by gluing the vertices with identical labels. Since $x$ is rational, an infinite path in $\widetilde\Gamma_x$ going upwards will be eventually periodic with the period corresponding to the period $w$ of $x$. The labels of grey vertices along this path will repeat with period $|w|$ as soon as iterations of the shift $\sigma$ erase $v$ along the path connecting vertices $10vw^\infty$ and $10w^\infty$. Gluing corresponding vertices along this path, together with all subtrees hanging down from these vertices, creates the graph precisely described in the statement. In particular, the grey vertices with the labels $10w^\infty$ through $10\sigma^{|w|-1}(w^\infty)$ create a nontrivial loop in the Schreier graph.

Note that since the words $v$ and $w$ have different endings, the edge in item 4 and the path in item 5 are well defined and do not lead to the situation where there are two edges labeled by the same generator having the same initial vertex, or the same terminal vertex.
\end{proof}

\begin{figure}[h]
\begin{center}
\epsfig{file=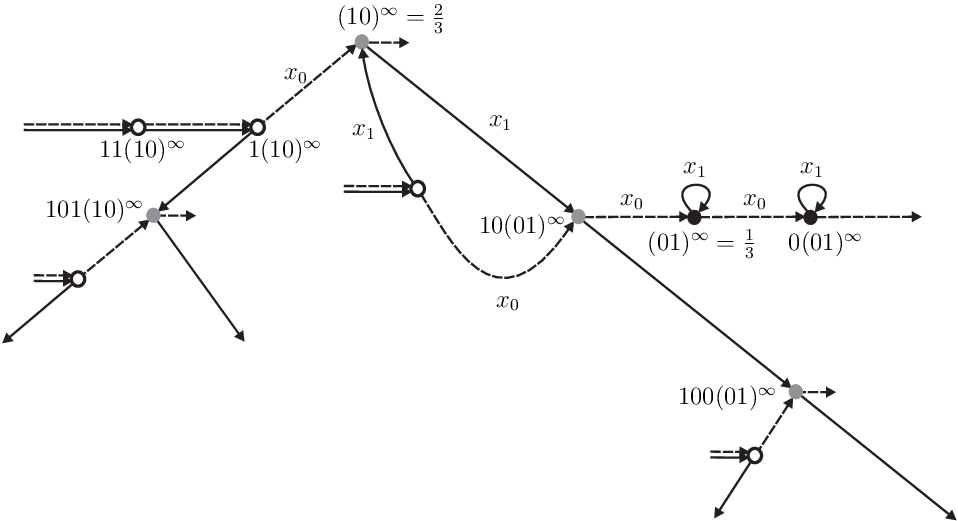, width=300pt}
\caption{Schreier graph of the action of $F$ on the orbit of $\frac13$\label{fig_schreier13}}
\end{center}
\end{figure}

For example, Schreier graph of the action of $F$ on the orbit of $\frac13$ is shown in Figure~\ref{fig_schreier13}.

\begin{remark}
For a rational point $q\in (0,1)$ the Schreier graph $\Gamma_q$ gives a way to find the shortest element stabilizing $q$ and not stabilizing its neighborhood. Every representation of such element as a word in generators of $F$ creates a loop in $\Gamma_q$ starting and ending at $q$ and going through the ``nontrivial loop'' in the graph. For example, for $q=\frac13=0.(01)^\infty$ the shortest such element is $x_0^{-1}x_1^{-2}x_0^2$. The graph of this element is depicted in Figure~\ref{fig_graph13}.
\end{remark}

\begin{remark}
If we apply the construction from Theorem~\ref{sec_schreier_rational} to dyadic rational numbers, we will recover exactly the graph $\Gamma$ from Figure~\ref{fig_schreier_dyadic}. Indeed, the number of grey vertices in the nontrivial loop corresponding to the period of a binary expansion of a rational element $x$ is equal to the length of the period. In the case of a dyadic rational number, the length of the period is 1, so there is just one grey vertex (labeled by $10^\infty=\frac12$) in the nontrivial loop, which makes this loop degenerate. In other words, we could have started from the set $\mathcal D$ on which $F$ acts according to the standard action, and from there get all Schreier graphs. This observation suggests a possible generalization of the method described here to construct Schreier graphs of the actions of other groups on the Cantor set.

\end{remark}

As direct corollaries of Theorems~\ref{thm_schreier_irrational} and~\ref{thm_schreier_rational} we have the following statements. Recall that for a graph $X$ the number of ends of $X$ is defined as the supremum of the number of unbounded components of $X-C$, where $C$ runs over all finite subgraphs of $X$.

\begin{corollary}
The Schreier graph of the action of $F$ on the orbit of any point in $(0,1)$ has infinitely many ends.
\end{corollary}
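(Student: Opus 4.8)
The plan is to read the number of ends directly off the explicit pictures supplied by Theorems~\ref{thm_schreier_irrational} and~\ref{thm_schreier_rational}, using the standard exhaustion description of ends. Recall that each $\Gamma_x$ is connected and locally finite (the degree is at most $4$, since we use two generators), so if $B_n$ denotes the ball of radius $n$ about the base vertex, the number of infinite connected components of $\Gamma_x\setminus B_n$ is non-decreasing in $n$ and the number of ends is its limit as $n\to\infty$; this limit is independent of the base vertex. Thus it suffices to show that this count is unbounded.

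First I would isolate the one structural feature common to both figures. Along the infinite upward path (the \emph{spine}) there sit infinitely many grey vertices $10u$, and both theorems assert that from each such grey vertex there hangs an infinite subtree, canonically isomorphic to the tree rooted at $101$ in Figure~\ref{fig_schreier_dyadic}. The two facts I need to extract from Table~\ref{tab_labeling} are: (i) each hanging tree is infinite and is joined to the rest of $\Gamma_x$ only through its grey root (the sole upward edge being the $x_0$-move $110u\to 10u$ at that root), and (ii) consecutive grey vertices on the spine lie at uniformly bounded distance, so that the number of such roots inside $B_n$ grows linearly in $n$. Granting (i) and (ii), the counting is immediate: fix $n$, let $g_1,\dots,g_m$ be the grey spine-vertices in $B_{n-1}$, so that $m\to\infty$ by (ii). Each hanging tree $T_i$ rooted at $g_i$ is infinite and locally finite, hence only finitely many of its vertices lie within distance $n$ of the base, so $T_i\setminus B_n$ is infinite; and by (i) the only vertex joining $T_i$ to the remainder of $\Gamma_x$ is $g_i\in B_n$. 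Therefore, after deleting $B_n$, the set $T_i\setminus B_n$ cannot be connected to any $T_j\setminus B_n$ with $j\neq i$, so $\Gamma_x\setminus B_n$ has at least $m$ pairwise distinct infinite components. Letting $n\to\infty$ shows $\Gamma_x$ has infinitely many ends. (Alternatively, one may note that a \emph{single} hanging tree already carries infinitely many ends of its own: the pendant rays of black vertices issuing from its grey vertices are pairwise separated by any large ball, so one tree alone forces the conclusion.)

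The main obstacle is precisely steps (i) and (ii): converting the visual statement ``a tree hangs down from each grey vertex'' into the rigorous separation claim that deleting the grey root disconnects the hanging tree from the rest of the graph, and confirming that grey roots recur along the spine with bounded gaps. Both follow from the local moves of Table~\ref{tab_labeling}, but the bookkeeping must be carried out uniformly for the irrational graphs of Figure~\ref{fig_correspondence} and for the rational graphs of Figure~\ref{fig_correspondence_rational}, where the spine eventually closes into the nontrivial loop yet the infinitely many hanging trees below it persist; in particular one must check that the edges of the nontrivial loop and of the line of black/white vertices never furnish an alternative attachment of a hanging tree, so that property (i) is not spoiled near the periodic part.
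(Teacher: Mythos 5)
Your argument is correct and is essentially the reasoning the paper intends: the corollary is stated there without proof as a direct consequence of Theorems~\ref{thm_schreier_irrational} and~\ref{thm_schreier_rational}, and your exhaustion count of the infinite hanging trees (together with the parenthetical observation that a \emph{single} hanging tree, being an infinite binary tree of grey vertices with pendant rays, already carries infinitely many ends --- which is what one needs in the rational case, where the spine closes into a finite loop and only finitely many roots lie on it) supplies exactly the missing details. One small bookkeeping slip in your step (i): the edge joining the grey root $10u$ of a hanging tree to the rest of the graph is the connection to its parent $10\sigma(u)$ (a direct $x_1$-edge when $u$ begins with $0$, or a length-two path through the white vertex $110\sigma(u)$ when $u$ begins with $1$), whereas the $x_0$-move $110u\to 10u$ you cite lies \emph{inside} the hanging tree, linking $10u$ to its child $101u$; this does not affect the argument, since the graph minus the cycle is still a forest and each hanging tree is attached at a single cut vertex.
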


\begin{figure}[h]
\begin{center}
\epsfig{file=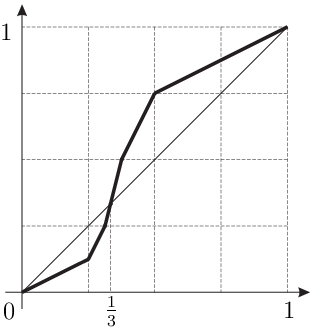, width=130pt}\\
\parbox{5in}{\caption{Graph of the shortest element stabilizing $\frac13$ and not stabilizing any neighborhood of $\frac13$\label{fig_graph13}}}
\end{center}
\end{figure}

\begin{corollary}
\label{cor_noniso}
Different points in $(0,1)$ have non-isomorphic pointed marked Schreier graphs.
\end{corollary}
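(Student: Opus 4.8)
The plan is to reduce the corollary to a purely algebraic statement about point stabilizers, thereby avoiding any case-by-case comparison of the explicit pictures in Figures~\ref{fig_schreier_dyadic}, \ref{fig_correspondence}, and~\ref{fig_correspondence_rational}. The key observation is that, for a transitive action, a pointed \emph{marked} graph (with oriented, generator-labeled edges, but with vertex labels \emph{not} part of the data) determines the stabilizer of its base vertex. Concretely, suppose $\phi\colon(\Gamma_x,x)\to(\Gamma_y,y)$ is an isomorphism of pointed marked graphs, so $\phi(x)=y$ and $\phi$ preserves the orientation and the generator-label ($x_0$ or $x_1$) of every edge. Then for any word $g=s_1s_2\cdots s_k$ in $\{x_0^{\pm1},x_1^{\pm1}\}$, reading $g$ off the edges starting at the base vertex produces the vertex $x^g$ in $\Gamma_x$ and $y^g$ in $\Gamma_y$, and $\phi$ carries the former to the latter; hence $x^g=x\iff y^g=y$. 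This means $\Stf(x)=\Stf(y)$, and conversely equal stabilizers give literally the same pointed marked graph. It therefore suffices to prove that distinct points of $(0,1)$ have distinct stabilizers in $F$.

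I would prove the latter by a surgery/bump construction of the type already used in Propositions~\ref{prop:maximal} and~\ref{prop_rationals}. Let $x\neq y$ in $(0,1)$ and assume $x<y$ (the other case is symmetric). Choose a dyadic rational $d$ with $x<d<y$. The subgroup of $F$ consisting of elements fixing $[0,d]$ pointwise is isomorphic to $F$ acting on $[d,1]$ (rescaling the dyadic interval $[d,1]$ to $[0,1]$), and this action moves every point of the open interval $(d,1)$. Since $y\in(d,1)$, there is an element $g$ in this subgroup with $g(y)\neq y$; and since $x<d$ we have $g(x)=x$, so $g\in\Stf(x)\setminus\Stf(y)$. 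Thus $\Stf(x)\neq\Stf(y)$, and combining with the first paragraph gives $\Gamma_x\not\cong\Gamma_y$ as pointed marked graphs.

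The only genuinely delicate point is the first step: verifying that an abstract isomorphism of \emph{edge}-marked pointed graphs necessarily respects the word-reading structure and hence equates stabilizers. Once this is granted, the remaining construction is routine, so I expect the reduction to be the crux of the argument. It is worth recording that this criterion also explains the subtle case of distinct dyadic rationals, which share the single underlying graph $\Gamma$ yet yield non-isomorphic \emph{pointed} graphs; equivalently, $\Gamma$ has trivial marked-graph automorphism group, which follows from the uniqueness of the induced subgraph $\Gamma_V$ established in the proof of Proposition~\ref{prpo_cantor_bendixson}. One could instead prove the corollary directly from the three explicit descriptions by exhibiting, for each $x\neq y$, a radius $R$ at which the $R$-balls about the two base vertices already differ; the stabilizer approach is preferable because it treats the dyadic, irrational, and non-dyadic rational cases uniformly in a single line of reasoning.
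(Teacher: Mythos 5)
Your proof is correct, but it takes a genuinely different route from the paper's. The paper argues directly from the explicit graph descriptions: for irrational $x,y$ it traces the infinite ascending path of grey vertices and shows an isomorphism must carry the path in $\Gamma_x$ to a parallel path in $\Gamma_y$, forcing $x=y$ by Theorem~\ref{thm_schreier_irrational}; the rational--rational case is handled similarly, and the mixed cases are separated by the presence of a nontrivial loop or of the distinguished induced subgraph from the proof of Proposition~\ref{prpo_cantor_bendixson}. You instead reduce everything to the standard fact that a pointed marked Schreier graph of a transitive action determines the stabilizer of the base vertex (each vertex has exactly one incoming and one outgoing edge per generator, so word-reading is well defined and preserved by any marked isomorphism, and the endpoint of a path reading a word depends only on the group element it represents), and then separate stabilizers by producing, for $x<d<y$ with $d$ dyadic, an element supported on $[d,1]$ moving $y$ --- for instance the conjugate of $x_0$ under a dyadic PL rescaling $[0,1]\to[d,1]$, which moves every point of $(d,1)$. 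Both steps are sound. What your approach buys is uniformity and independence from Theorems~\ref{thm_schreier_irrational} and~\ref{thm_schreier_rational}: it treats dyadic, non-dyadic rational, and irrational points identically, and it correctly isolates the one subtle point, namely that vertex labels are not part of the marked-graph data (otherwise the statement would be vacuous). What the paper's approach buys is that it genuinely functions as a corollary of the structure theorems and illustrates how to extract information from the explicit pictures; it also foreshadows the later, harder result that even the unmarked automorphism groups of these graphs are trivial, which your stabilizer argument does not address.
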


\begin{proof}
Suppose $\Gamma_x$ and $\Gamma_y$ are isomorphic and let $\phi\colon\Gamma_x\to\Gamma_y$ be an isomorphism. First we assume that $x$ and $y$ are irrational. We can further assume that $x$ is a grey vertex in $\Gamma_x$. Let $v_0=x$. Since 2-neighborhood $B(v_0,2)$ of $v_0=x$ must be isomorphic to 2-neighborhood $B(\phi(v_0),2)$ of $\phi(v_0)=y$, $\phi(v_0)$ also has to be a grey vertex in $\Gamma_y$. Moreover, there is only one way to map $B(v_0,2)$ to $B(\phi(v_0),2)$. Let $v_1$ be the grey vertex in $B(v_0,2)$ right above $v_0$. Then $\phi(v_1)$ is the grey vertex right above $\phi(v_0)$. Moreover, if $v_1$ was to the right (left) of $v_0$, $\phi(v_1)$ will be to the right (left) of $\phi(v_0)$. Similarly, we denote by $v_{n+1}$ the grey vertex in $\Gamma_x$ right above $v_n$ for $n\geq 1$ and show that the image of the path $P=(x=v_0, v_1,\ldots,v_{n+1})$ under $\phi$ goes from $\phi(v_0)=y$ up to $\phi(v_{n+1})$ and is parallel to $P$. Thus, by Theorem~\ref{thm_schreier_irrational} we must have $x=y$.

The case when both $x$ and $y$ are rational is considered similarly. Finally, the graphs $\Gamma_x$ and $\Gamma_y$ for rational $x$ and irrational $y$ cannot be isomorphic.
\end{proof}

If we consider non-pointed Schreier graphs (without selected vertex), then the isomorphism class of $\Gamma_x$ will depend only on the orbit of $x$. More precisely,

\begin{corollary}
The following conditions are equivalent:
\begin{enumerate}
\item[(a)] the marked Schreier graph of points $x$ and $y$ in $(0,1)$ are isomorphic as abstract marked graphs (without selected vertex);
\item[(b)] $x$ and $y$ belong to the same orbit of $F$;
\item[(c)] $x$ and $y$ have binary expansions with the same tails.
\end{enumerate}
\end{corollary}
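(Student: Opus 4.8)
The plan is to establish the two equivalences (a)$\Leftrightarrow$(b) and (b)$\Leftrightarrow$(c) separately. The first will rest almost entirely on Corollary~\ref{cor_noniso}, while the second is the identification of the $F$-orbits on $(0,1)$ with the tail-equivalence classes of binary expansions. I would treat (b)$\Rightarrow$(a) as immediate: the non-pointed marked Schreier graph of a point is by definition the Schreier graph of the action of $F$ on the orbit of that point, so if $x$ and $y$ lie in a common orbit the two graphs are literally the same graph and hence are isomorphic once the base vertices are forgotten.

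For (a)$\Rightarrow$(b) I would use Corollary~\ref{cor_noniso}. Suppose $\phi\colon\Gamma_x\to\Gamma_y$ is an isomorphism of abstract marked graphs. Choose any vertex $v$ of $\Gamma_x$; it is a point of the orbit $\mathcal O_x$ of $x$, and regarding $v$ as a base point turns $\phi$ into an isomorphism of \emph{pointed} marked graphs $\Gamma_v\cong\Gamma_{\phi(v)}$, where $\phi(v)\in\mathcal O_y$. By Corollary~\ref{cor_noniso} distinct points of $(0,1)$ have non-isomorphic pointed marked Schreier graphs, so necessarily $v=\phi(v)$. Hence $v\in\mathcal O_x\cap\mathcal O_y$, the two orbits meet, and therefore $\mathcal O_x=\mathcal O_y$; that is, $x$ and $y$ lie in the same orbit. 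This step carries the logical weight of the equivalence, but the genuine work has already been done in Corollary~\ref{cor_noniso}.

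It then remains to prove (b)$\Leftrightarrow$(c). The implication (b)$\Rightarrow$(c) is immediate from the standard action~\eqref{eqn_cantor_action}: each generator $x_0$, $x_1$ sends a sequence with infinite suffix $w$ to a sequence with the same suffix $w$, so every element of $F$ preserves the tail of a binary expansion. For (c)$\Rightarrow$(b) I would argue by prefix replacement. If $x$ and $y$ are dyadic they lie in the single orbit of $\tfrac12$ by Proposition~\ref{prop_schreier_dyadic}. Otherwise write $x=us$ and $y=vs$ with finite prefixes $u,v$ and a common infinite tail $s$; the increasing affine bijection of the dyadic cylinder $I_u=[0.u,\,0.u+2^{-|u|}]$ onto $I_v$ has slope $2^{|u|-|v|}$, a power of $2$, and realizes exactly the prefix replacement $0.uz\mapsto 0.vz$, so it sends $x=us$ to $y=vs$. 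Using that $F$ acts transitively on pairs of dyadic rational numbers~\cite{cannon_fp:intro_thompson} one extends this single affine piece across the (dyadic) complementary intervals to a piecewise-linear homeomorphism of all of $[0,1]$ with dyadic breakpoints and power-of-$2$ slopes, i.e.\ to an element $f\in F$ with $f(x)=y$.

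The main obstacle is concentrated in the direction (c)$\Rightarrow$(b), and it is more bookkeeping than conceptual: one must check that the extension across the complementary intervals stays inside $F$ and that the two binary expansions of dyadic endpoints are handled consistently (so that the statement ``have binary expansions with the same tails'' is read as existence of equal-tailed expansions, under which all dyadic rationals form one class). Both points are routine given the transitivity of $F$ on dyadic pairs already invoked in Propositions~\ref{prop:maximal} and~\ref{prop_rationals}, so I do not expect any real difficulty beyond careful statement of this convention.
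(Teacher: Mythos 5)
Your proposal is correct, and the (a)$\Leftrightarrow$(b) and (b)$\Rightarrow$(c) parts match the paper: the paper likewise gets (a) from (b) for free and derives the remaining implication involving (a) from Corollary~\ref{cor_noniso} (it phrases it as (a)$\Rightarrow$(c) rather than your (a)$\Rightarrow$(b), but the argument is the same one-line appeal to that corollary). Where you genuinely diverge is (c)$\Rightarrow$(b). The paper reads an element of $F$ off the Schreier graph itself: starting from the vertex $x$, one moves up the tree of grey vertices, which applies the shift $\sigma$ to the label, until a common tail of $x$ and $y$ is reached, and then moves back down according to the prefix of $y$; the path traversed spells a word in $x_0^{\pm1},x_1^{\pm1}$ sending $x$ to $y$. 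You instead construct the element of $F$ directly by prefix replacement: an affine map of slope $2^{|u|-|v|}$ from the cylinder $I_u$ onto $I_v$, extended over the complementary dyadic intervals via transitivity on dyadic pairs. Your route is more elementary and independent of Theorems~\ref{thm_schreier_irrational} and~\ref{thm_schreier_rational}, whereas the paper's route showcases that the graphs already encode the orbit structure. One point you dismiss as bookkeeping does need a word: if the chosen prefix $u$ is $0^k$ (or $1^k$) while $v$ is not, then $I_u$ contains an endpoint of $[0,1]$ but $I_v$ does not, and no order-preserving homeomorphism of $[0,1]$ can carry $I_u$ affinely onto $I_v$. Since $x,y\in(0,1)$, every binary expansion of each contains both digits, so one simply takes prefixes long enough to contain both a $0$ and a $1$; with that choice both cylinders lie in $(0,1)$ and the extension goes through. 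With this (and your stated convention on the two expansions of dyadic rationals) the argument is complete.
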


Note, that the equivalence of (b) and (c) was proved also by Belk and Matucci (see~Proposition 3.2.3 in~\cite{matucci:phd08} or Proposition 2.4 in the second revision of the preprint~\cite{belk_m:dynamics_of_F}).

\begin{proof}
The fact that (b) implies (c) follows from the definition of the standard action of $F$ on the Cantor set~\eqref{eqn_cantor_action}. The converse of this implication follows from the structure of Schreier graphs described in Theorems~\ref{thm_schreier_irrational} and~\ref{thm_schreier_rational}. Namely, given a binary expansion of $x$, one can explicitly construct a vertex in $\Gamma_x$ whose label is a binary expansion of $y$. Indeed, first one can move from $x$ to some of the grey vertices, and then moving up the tree (and possibly around the nontrivial loop in the graph) corresponds to applying shift $\sigma$ to the label of a vertex in the path. Since expansions of $x$ and $y$ have the same endings, at some point we will reach the vertex whose label labeled by binary expansion, which is an ending of a binary expansion of $y$. From this point one can move down the tree according to the beginning digits of $y$ to the vertex with label $y$.

Further, (a) follows directly from (b). The converse ((a) implies (c)) easily follows from the proof of Corollary~\ref{cor_noniso}.
\end{proof}

Another corollary of Theorems~\ref{thm_schreier_irrational} and~\ref{thm_schreier_rational} is that the Schreier dynamical system built from the action of $F$ on the orbit of $x\in(0,1)$ essentially does not depend on $x$. More precisely, the perfect kernel of the closure of the space of pointed Schreier graphs $\Gamma_y$, where $y$ is in the orbit of $x$, coincides with the set $\mathcal D$. Therefore, we obtain

\begin{corollary}
The action of $F$ on the perfect kernel $\mathcal D$ of the Schreier dynamical system associated to the orbit of $x\in(0,1)$ is topologically conjugate to the standard action of $F$ on the Cantor set $X^\omega$.
\end{corollary}

\begin{proof}
Completely analogous to the proof of Theorem~\ref{cor_action_conjug}.
\end{proof}

Finally, we note that our approach does not answer the question of amenability. Namely, the following proposition holds.

\begin{proposition}
Let $\{d_1,\ldots,d_n\}$ be a finite subset of the Cantor set. Then
the Schreier graph $\Gamma(F,\Stf(d_1,\ldots,d_n),\{x_0,x_1\})$ is
amenable.
\end{proposition}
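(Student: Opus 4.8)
The plan is to exhibit an explicit F\o lner sequence in the Schreier graph $\Gamma(F,\Stf(d_1,\dots,d_n),\{x_0,x_1\})$, which by the orbit description of Schreier graphs is the orbital graph of the diagonal action of $F$ on the orbit of the tuple $\underline d=(d_1,\dots,d_n)$. If every $d_i$ lies in $\{0^\infty,1^\infty\}$ then $\Stf(\underline d)=F$ and the graph is a single point, so I may assume that some coordinate lies in $(0,1)$. The idea is to ride the sequence $p_k=x_0^{\,k}(\underline d)$, $k\ge 0$, which accumulates at the endpoint $0$ that $F$ fixes, and to show that a tail of this sequence spans a one-dimensional ``ray with loops'' inside the Schreier graph.

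The two facts I would use are visible directly from the formulas defining the generators on $[0,1]$: the homeomorphism $x_0$ satisfies $x_0(t)<t$ for all $t\in(0,1)$ (so $0$ is a global attractor of $x_0$ on $[0,1)$), while $x_1$ is the identity on $[0,\tfrac12]$. Consequently, for each coordinate $d_i\in(0,1)$ the numbers $x_0^{\,k}(d_i)$ strictly decrease to $0$, while the fixed coordinates $d_i\in\{0,1\}$ stay put. Hence there is a $K$ such that for all $k\ge K$ the tuple $p_k$ has all its non-fixed coordinates in $[0,\tfrac12)$, the $p_k$ are pairwise distinct, and
\[
x_0(p_k)=p_{k+1},\qquad x_1(p_k)=p_k.
\]
In the Schreier graph this says that for $k\ge K$ the vertex $p_k$ carries an $x_1$-loop and is joined by $x_0$-edges only to $p_{k-1}$ and $p_{k+1}$; since $x_0$ acts bijectively, these are its only neighbors.

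It then follows that the finite sets $F_N=\{p_K,p_{K+1},\dots,p_{K+N}\}$ have boundary of size at most $2$ (only the two extreme vertices have a neighbor outside $F_N$), so $|\partial F_N|/|F_N|\le 2/(N+1)\to 0$ and $\{F_N\}$ is a F\o lner sequence; thus the graph is amenable. Equivalently, the construction produces an $F$-invariant mean on the orbit of $\underline d$ as a weak-$*$ limit of the uniform measures on the $F_N$, i.e.\ it shows that $\Stf(d_1,\dots,d_n)$ is co-amenable in $F$. The only point requiring care---the main obstacle---is verifying that the ray is \emph{clean}, namely that no edge of the Schreier graph leaves $\{p_k\}_{k\ge K}$ except at its two ends; this is exactly what the vanishing of the $x_1$-displacement and the strict contraction of $x_0$ near $0$ guarantee, and it is also where one must dispose of the degenerate coordinates $d_i\in\{0,1\}$ and confirm that the $p_k$ do not collide. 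I would emphasize that this argument is completely insensitive to the (exponential) growth and the intricate tree-with-combs structure of the individual graphs $\Gamma_{d_i}$ described in Theorems~\ref{thm_schreier_irrational} and~\ref{thm_schreier_rational}: amenability comes entirely from the single attracting ray toward the fixed endpoint.
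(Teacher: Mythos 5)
Your proof is correct and takes essentially the same approach as the paper, which simply cites the analogous Proposition~3 of~\cite{savchuk:thompson} and notes that these graphs ``possess arbitrarily long line segments with a boundary of size 2.'' Your ray $p_k=x_0^{\,k}(\underline d)$, with $x_1$ acting trivially once all moving coordinates are pushed below $\tfrac12$, is exactly an explicit construction of those segments.
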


\begin{proof}
The proof is identical to the proof of the analogous proposition in~\cite{savchuk:thompson} (Proposition 3). The amenability of these graphs is given by the fact that all of them possess arbitrarily long line segments with a boundary of size 2.
\end{proof}


\def\cprime{$'$}

\noindent email: \href{mailto:savchuk@usf.edu}{savchuk@usf.edu}\\
Department of Mathematics and Statistics\\University of South Florida\\
               4202 E Fowler Ave\\
               Tampa, FL 33620-5700

\end{document}